\theoremstyle{plain}
\newtheorem{theorem}{Theorem}[section]
\newtheorem{corollary}[theorem]{Corollary}
\newtheorem{definition}[theorem]{Definition}
\def\ep{\varepsilon}
\begin{document}

\vskip 0.5cm

\title[non unital  general  tracially approximated ${\rm C^*}$-algebras] {non unital generalized tracially approximated ${\rm C^*}$-algebras}

\author{George A. Elliott, Qingzhai Fan, and Xiaochun Fang}

\address{George A. Elliott\\ Department of Mathematics\\ Univesity of Toronto\\ Toronto\\ Ontario \\ Canada  \hspace{0.1cm} M5S~ 2E4}
\email{elliott@math.toronto.edu}

\address{Qingzhai Fan\\ Department of Mathematics\\  Shanghai Maritime University\\
Shanghai\\China
\\  201306 }
\email{qzfan@shmtu.edu.cn}

\address{Xiaochun Fang\\ Department of Mathematics\\ Tongji
University\\
Shanghai\\China
\\200092}
\email{xfang@tongji.edu.cn}

\thanks{{\bf Key words}  ${\rm C^*}$-algebras, tracial approximation, tracially $\mathcal{Z}$-absorbing.}

\thanks{2000 \emph{Mathematics Subject Classification\rm{:}} 46L35, 46L05, 46L80}

\begin{abstract}
Let $\Omega$ be a class of ${\rm C^*}$-algebras. In this paper, we  study  a class of not necessarily unital  generalized  tracial approximation  ${\rm C^*}$-algebras, and  the class of simple  ${\rm C^*}$-algebras which can be generally tracially
approximated by ${\rm C^*}$-algebras in $\Omega$, denoted by  ${\rm gTA}\Omega$. Let  $\Omega$ be a class of  unital ${\rm C^*}$-algebras and let $A$ be a simple unital ${\rm C^*}$-algebra. Then  $A\in {\rm gTA}\Omega$, if, and only if, $A\in {\rm WTA}\Omega$ (where ${\rm WTA}\Omega$ is the class of   weakly  tracially  approximable unital ${\rm C^*}$-algebras introduced by Elliott, Fan, and Fang).
Consider the class of  ${\rm C^*}$-algebras which  are tracially $\mathcal{Z}$-absorbing (or are of  tracial nuclear dimension at most $n$, or are  $m$-almost divisible, or have the property $\rm SP$). Then $A$  is  tracially $\mathcal{Z}$-absorbing  (respectively, has   tracial nuclear dimension at most $n$, is weakly ($n, m$)-almost divisible, has the property $\rm SP$) for  any  simple  ${\rm C^*}$-algebra $A$ in the corresponding  class of generalized tracial approximation ${\rm C^*}$-algebras.
\end{abstract}
\maketitle

\section{Introduction}
 The Elliott program for the classification of amenable
 ${\rm C^*}$-algebras might be said to have begun with the ${\rm K}$-theoretical
 classification of AF algebras in \cite{E1}. A major next step was the classification of simple
  AH algebras without dimension growth (in the real rank zero case see \cite{E6}, and in the general case
 see \cite{GL}).
  This led eventually to the classification of  simple separable amenable ${\rm C^*}$-algebras with finite nuclear dimension in the UCT class (see \cite{KP}, \cite{PP2}, \cite{EZ5}, \cite{GLN1}, \cite{GLN2}, \cite{TWW1}, \cite{EGLN1},  \cite {GL2}, \cite{GL4}, and \cite{GL3}).

A crucial intermediate step was Lin's axiomatization of  Elliott-Gong's decomposition theorem for  simple AH algebras of real rank zero (classified by Elliott-Gong in \cite{E6}) and Gong's decomposition theorem (\cite{G1}) for simple AH algebras (classified by Elliott-Gong-Li in \cite{GL}). For this purpose, Lin introduced the concepts of  TAF and TAI (\cite{L0} and \cite{L1}). Instead of assuming inductive limit structure, Lin started with a certain abstract (tracial) approximation property. Elliott and  Niu in \cite{EZ} considered  this  notion of
 tracial approximation by  other classes of unital ${\rm C^*}$-algebras than the finite-dimensional ones  for TAF and the interval algebras for TAI.

Large and centrally  large subalgebras were introduced in \cite{P3} and \cite{AN} by Phillips and  Archey   as  abstractions of Putnam's orbit breaking subalgebra of the crossed product algebra ${\rm C}^*(X,\mathbb{Z},\sigma)$ of the Cantor set by a minimal homeomorphism in \cite{P}.

Inspired  by  centrally large subalgebras and tracial approximation ${\rm C^*}$-algebras, Elliott, Fan, and Fang formulated the notation  unital weakly   tracial approximation ${\rm C^*}$-algebra in \cite{EFF}, generalizing  both Archey and Phillips's centrally large subalgebras and Lin's notion of tracial approximation.

Let $\Omega$ be a class of unital ${\rm C^*}$-algebras. Elliott, Fan, and Fang in \cite{EFF} introduced   the class
 of unital ${\rm C^*}$-algebras which can be weakly  tracially approximated by ${\rm C^*}$-algebras in $\Omega$, and denoted this class by ${\rm WTA}\Omega$.

\begin{definition}\label{def:1.1}(\cite{EFF})  A  simple unital ${\rm C^*}$-algebra $A$  is  said to belong to the class ${\rm WTA}\Omega$  if, for any
 $\varepsilon>0$, any finite
subset $F\subseteq A$, and any  non-zero element $a\geq 0$, there
exist a  projection $p\in A$, an  element $g\in A$ with $0\leq g\leq 1$,
  and a unital ${\rm C^*}$-subalgebra $B$ of $A$ with
$g\in B, 1_B=p$, and $B\in \Omega$, such that

$(1)$  $(p-g)x\in_{\varepsilon} B, ~ x(p-g)\in_{\varepsilon} B$ for all $x\in  F$,

$(2)$ $\|(p-g)x-x(p-g)\|<\varepsilon$ for all $x\in F$,

$(3)$ $1-(p-g)\precsim a$ (see Section 2), and

$(4)$ $\|(p-g)a(p-g)\|\geq \|a\|-\varepsilon$.
\end{definition}

Inspired by   Lin, Elliott, and Niu's  tracial approximation structure, by Castillejos, Li,  and Szabo's    tracial $\mathcal{Z}$-stability of simple non-unital ${\rm C^*}$-algebras in \cite{CKS}, by  Amini, Golestani, Jamali, and Phillips's  simple tracially $\mathcal{Z}$-absorbing ${\rm C^*}$-algebras in \cite{AGJC}, and by Elliott, Fan, and Fang's  weak tracial approximation of unital  ${\rm C^*}$-algebras in \cite{EFF}, in this paper, we shall introduce a class of not necessarily unital   generalized tracial approximation ${\rm C^*}$-algebras.

 Let  $\Omega$ be a class of  ${\rm C^*}$-algebras. We define as follows  the class
 of ${\rm C^*}$-algebras which can be   tracially approximated  in the generalized sense by ${\rm C^*}$-algebras in $\Omega$, and denote this class by ${\rm gTA}\Omega$.

\begin{definition}\label{def:1.2}
Let $A$ be a simple ${\rm C^*}$-algebra. $A$ will be said to
      belong to the class ${\rm gTA}\Omega$ if,  for any $\varepsilon>0$, every finite set $F\subseteq A$, and for every pair of  positive elements $a, y\in A$ with $a\neq 0$,  there exist a ${\rm C^*}$-subalgebra $B$ of $A$ with $B\in \Omega$,  and a positive element $g\in B$ with $\|g\|\leq 1$,  such that the following conditions hold:

$(1)$ $\|xg-gx\|<\ep$ for all $x\in  F$,

$(2)$ $gx\in_\ep B, xg\in_\ep B$ for
all $x\in  F$,

 $(3)$ $(y^{2}-ygy-\varepsilon)_{+}\precsim a$, and

$(4)$ $\|gag\|\geq \|a\|-\varepsilon$.
  \end{definition}

 Let  $\Omega$ be a class of  unital ${\rm C^*}$-algebras and $A$ be a simple unital ${\rm C^*}$-algebra. As  we shall show,  $A\in {\rm gTA}\Omega$ if, and only if, $A\in {\rm WTA}\Omega$. (Theorem \ref{thm:2.14})

We shall prove the following four main results:

 Let $\Omega$ be a class of
${\rm C^*}$-algebras which   are tracially $\mathcal{Z}$-absorbing
(a property introduced by  Amini, Golestani, Jamali, and Phillips in  \cite{AGJC} and Castillejos, Li,  and Szabo in \cite{CKS}; see Definition \ref{def:2.6}).   Then  $A$  is  tracially $\mathcal{Z}$-absorbing  for  any   simple  ${\rm C^*}$-algebra $A\in{\rm gTA}\Omega$. (Theorem \ref{thm:3.1})
\vskip 0.1cm

 Let $\Omega$ be a class of
${\rm C^*}$-algebras which have  the second type of tracial nuclear dimension at
    	most $n$ (a property introduced by  Fu in \cite{FU}; see Definition \ref{def:2.8}).  Then  $A$ has  the second type of  tracial nuclear dimension at	most $n$  for  any   simple  ${\rm C^*}$-algebra $A\in{\rm gTA}\Omega.$ (Theorem \ref{thm:3.3}.)
\vskip 0.1cm

Let $\Omega$ be a class of
${\rm C^*}$-algebras  which are  $m$-almost divisible (a property introduced by Robert and Tikuisis in \cite{RT}; see Definition \ref{def:2.3}).   Let  $A\in{\rm gTA}\Omega$ be a  simple   stably finite ${\rm C^*}$-algebra such that for any $n\in \mathbb{N}$ the
 ${\rm C^*}$-algebra  $\rm{M}$$_n(A)$  belongs to the class ${\rm gTA}\Omega$. Then $A$  is weakly $(2, m)$-almost divisible (see Definition \ref{def:2.4}). (Theorem \ref{thm:3.4}.)
\vskip 0.1cm

Let $\Omega$ be a class of  ${\rm C^*}$-algebras with the property $\rm SP$.  Then  $A$  has the  property $\rm SP$  for  any  simple ${\rm C^*}$-algebra $A\in{\rm  gTA}\Omega.$ (Theorem \ref{thm:3.5}.)

 \section{Preliminaries and definitions}
Recall that  a $\rm C^*$-algebra  $A$ has the  property $\rm SP$ if every non-zero
hereditary $\rm C^*$-subalgebra of $A$ contains a non-zero projection.

Let $A$ be a ${\rm C^*}$-algebra, and let ${\rm M}_n(A)$ denote the ${\rm C^*}$-algebra of  $n\times n$ matrices with entries
elements of $A$. Let ${\rm M}_{\infty}(A)$ denote the algebraic inductive  limit of the sequence  $({\rm M}_n(A),\phi_n),$
where $\phi_n:{\rm M}_n(A)\to {\rm M}_{n+1}(A)$ is the canonical embedding as the upper left-hand corner block.
 Let ${\rm M}_{\infty}(A)_+$ (respectively, ${\rm M}_{n}(A)_+$) denote
the positive elements of ${\rm M}_{\infty}(A)$ (respectively, ${\rm M}_{n}(A)$).  Given $a, b\in {\rm M}_{\infty}(A)_+,$
one says  that $a$ is Cuntz subequivalent to $b$ (written $a\precsim b$) if there is a sequence $(v_n)_{n=1}^\infty$
of elements of ${\rm M}_{\infty}(A)$ such that $$\lim_{n\to \infty}\|v_nbv_n^*-a\|=0.$$
One says that $a$ and $b$ are Cuntz equivalent (written $a\sim b$) if $a\precsim b$ and $b\precsim a$. We  shall write $\langle a\rangle$ for the Cuntz equivalence class of $a$.

Let $A$ be a  unital ${\rm C^*}$-algebra. Recall \cite{APT} that a positive element $a\in A$ is called purely positive if $a$ is
not Cuntz equivalent to a projection. Let $A$ be a stably finite  ${\rm C^*}$-algebra (c.f., \cite{L2}) and let  $a\in A$ be  a positive element. Then either $a$  is a purely positive element or $a$ is Cuntz equivalent to a projection (c.f., \cite{APT}).

Given $a$ in ${\rm M}_{\infty}(A)_+$ and $\varepsilon>0,$ we shall denote by $(a-\varepsilon)_+$ the element of ${\rm C^*}(a)$ corresponding (via the functional calculus) to the function $f(t)={\max (0, t-\varepsilon)},~~ t\in \sigma(a)$. By the functional calculus, it follows in a straightforward manner that $((a-\varepsilon_1)_+-\varepsilon_2)_+=(a-(\varepsilon_1+\varepsilon_2))_+.$

The following facts are  well known.
\begin{theorem}{\rm (\cite{PPT}, \cite{HO}, \cite{P3}, \cite{RW}.)} \label{thm:2.1} Let $A$ be a ${\rm C^*}$-algebra.

 $(1)$ Let $a, b\in A_+$. Let  $\varepsilon>0$  be such that
$\|a-b\|<\varepsilon$.  Then there exists a contraction $d$ in $A$ with $(a-\varepsilon)_+=dbd^*$.

$(2)$ Let $a, p$ be positive elements in ${\rm M}_{\infty}(A)$ with $p$ a projection. If $p\precsim a,$ then there is $b$ in ${\rm M}_{\infty}(A)_+$ such that  $bp=0$ and $b+p\sim a$.

 $(3)$ Let $a$ be a  positive element  of $A$
not Cuntz equivalent to a projection. Let  $\delta>0$,  and let $f\in C_0(0,1]$ be a non-negative function with $f=0$ on $(\delta,1),$  $f>0$ on $(0,\delta)$,
and $\|f\|=1$.  Then $f(a)\neq 0$
and  $(a-\delta)_++f(a)\precsim a.$

$(4)$ Let $a, b\in A$ satisfy $0\leq a\leq b$. Let $\varepsilon\geq 0$.
Then $(a-\varepsilon)_+\precsim(b-\varepsilon)_+$ (Lemma 1.7 of \cite{P3}).

$(5)$ Let $c\in A$ and $\varepsilon>0$.
Then $(cc^*-\varepsilon)_+\sim (c^*c-\varepsilon)_+$.
\end{theorem}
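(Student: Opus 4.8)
The plan is to treat each of the five facts separately, reducing everything to functional calculus inside the commutative $C^*$-algebra $C^*(a)$ generated by a single positive element, together with the polar-type intertwining identity $c\,\phi(c^*c)=\phi(cc^*)\,c$, valid for every continuous $\phi$ with $\phi(0)=0$ (proved first for $\phi(t)=t^{n+1}$ by the elementary relation $c(c^*c)^n=(cc^*)^nc$ and then extended by uniform approximation by polynomials without constant term). Whenever an order inequality such as $a\le b+\varepsilon\cdot 1$ is needed I would pass to the unitization $\tilde A$, which does not affect Cuntz comparison.

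For $(5)$ I would exhibit an explicit witness. Put $r(t)=(t-\varepsilon)_+^{1/2}t^{-1/2}$ for $t>0$ and $r(0)=0$; since $r$ vanishes identically on $[0,\varepsilon]$ it is continuous on $\sigma(c^*c)$, so $d:=c\,r(c^*c)$ is a well-defined element of $A$. Writing $\psi(t)=(t-\varepsilon)_+/t$ with $\psi(0)=0$, a direct computation gives $d^*d=r(c^*c)^2(c^*c)=(c^*c-\varepsilon)_+$, while the intertwining identity yields $dd^*=c\,\psi(c^*c)\,c^*=\psi(cc^*)(cc^*)=(cc^*-\varepsilon)_+$. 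Hence $(cc^*-\varepsilon)_+=dd^*\sim d^*d=(c^*c-\varepsilon)_+$. For $(3)$ the two assertions come apart. First, since $a$ is not Cuntz equivalent to a projection, $0$ is not an isolated point of $\sigma(a)$: otherwise the characteristic function of $\sigma(a)\setminus\{0\}$ would be continuous on $\sigma(a)$ and would produce a projection in $C^*(a)$ Cuntz equivalent to $a$. Thus $\sigma(a)$ meets $(0,\delta)$, and because $f>0$ there, $f(a)\neq 0$. Second, $(a-\delta)_++f(a)=h(a)$ for the continuous function $h(t)=(t-\delta)_++f(t)$, which satisfies $h(0)=0$; its support in $\sigma(a)$ is therefore contained in $\sigma(a)\setminus\{0\}$, the support of the identity function, so $h(a)\precsim a$ already inside $C^*(a)$, hence in $A$.

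The remaining three are the standard Cuntz-comparison lemmas, for which I would cite the indicated sources while recording the mechanism. Fact $(1)$ is R\o rdam's lemma: with the contraction $e(t)=(t-\varepsilon)_+^{1/2}t^{-1/2}$ one has $(a-\varepsilon)_+=e(a)\,a\,e(a)$, and replacing the middle $a$ by $b$ changes the value by at most $\|a-b\|<\varepsilon$ in norm; the exact factorization $(a-\varepsilon)_+=dbd^*$ by a contraction $d$ is then obtained by the standard refinement that upgrades this norm estimate to an identity. Fact $(2)$ is the projection-complementation lemma: from $p\precsim a$ one finds $x$ with $\|xax^*-p\|$ small, and since $p$ is a projection this produces a projection $q\in\overline{aAa}$ with $q\sim p$; splitting $a$ along $q$ yields an orthogonal positive element $b$ with $b+q\sim a$, and after relocating $p$ into a matrix corner orthogonal to $b$ one arranges $bp=0$ and $b+p\sim a$. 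Fact $(4)$ is exactly Lemma 1.7 of \cite{P3}, a functional-calculus argument starting from $(a-\varepsilon)_+=r(a)^{1/2}a\,r(a)^{1/2}$ with $r(t)=(t-\varepsilon)_+/t$ and using $a\le b$.

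I expect the genuine technical content to sit in the exact factorization in Fact $(1)$ and in the comparison with $(b-\varepsilon)_+$ in Fact $(4)$, where a mere domination by a function of $b$ must be converted into subequivalence to the cut-down $(b-\varepsilon)_+$; the naive route that dominates $(a-\varepsilon)_+$ by $r(a)^{1/2}b\,r(a)^{1/2}$ only gives $(a-\varepsilon)_+\precsim b$, so the finer estimate of \cite{P3} is needed. The other three parts are either explicit, as in $(5)$, or reduce to spectral bookkeeping in $C^*(a)$, as in $(3)$, and to the routine complementation in $(2)$. Since the statement is flagged as well known, the cleanest presentation is to invoke \cite{PPT}, \cite{HO}, \cite{P3}, \cite{RW} for $(1)$, $(2)$, and $(4)$, and to give the short direct arguments for $(3)$ and $(5)$.
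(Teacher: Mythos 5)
Your proposal is correct, and it matches the paper's treatment: the paper offers no proof of Theorem \ref{thm:2.1} at all, simply recording these facts as well known with citations to \cite{PPT}, \cite{HO}, \cite{P3}, \cite{RW}, which is exactly what you do for parts $(1)$, $(2)$, and $(4)$. Your self-contained arguments for $(3)$ (non-isolation of $0$ in $\sigma(a)$ plus $h(0)=0$ giving $h(a)\precsim a$) and for $(5)$ (the explicit witness $d=c\,r(c^*c)$ with $d^*d=(c^*c-\varepsilon)_+$ and $dd^*=(cc^*-\varepsilon)_+$) are correct and go beyond what the paper supplies.
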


The following  theorem is  Lemma 2.3 of \cite{AGJC}.

\begin{theorem}{\rm (\cite{AGJC}.)} \label{thm:2.2} Let $A$ be a ${\rm C^*}$-algebra, let $x\in A$ be nonzero, and let $b\in A_+$. Then for any $\varepsilon>0$,
$$(xbx^*-\varepsilon)_+\precsim x(b-\varepsilon/\|x\|^2)_+\precsim (b-\varepsilon/\|x\|^2)_+.$$
If $\|x\|\leq 1$ then
$$(xbx^*-\varepsilon)_+\precsim x(b-\varepsilon)_+x^*\precsim (b-\varepsilon)_+.$$
\end{theorem}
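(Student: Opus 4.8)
The plan is to reduce everything to the two elementary facts recorded in Theorem \ref{thm:2.1}, namely the order property $(4)$ and the relation $dd^{*}\sim d^{*}d$; I will \emph{not} use Theorem \ref{thm:2.1}$(1)$, which conveniently avoids its strict-inequality hypothesis. Throughout I read the middle term $x(b-\delta)_{+}$ in the stated chain as the positive element $x(b-\delta)_{+}x^{*}$, the only sensible interpretation for Cuntz comparison. Write $\delta=\varepsilon/\|x\|^{2}$, and set $a=xbx^{*}$ and $a'=x(b-\delta)_{+}x^{*}$.

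First I would dispose of the right-hand subequivalence $x(b-\delta)_{+}x^{*}\precsim(b-\delta)_{+}$. Putting $d=x(b-\delta)_{+}^{1/2}$ one has $dd^{*}=a'$ and $d^{*}d=(b-\delta)_{+}^{1/2}\,x^{*}x\,(b-\delta)_{+}^{1/2}$, so the elementary relation $dd^{*}\sim d^{*}d$ gives $a'\sim(b-\delta)_{+}^{1/2}x^{*}x(b-\delta)_{+}^{1/2}$. Since $x^{*}x\le\|x\|^{2}$, conjugating by $(b-\delta)_{+}^{1/2}$ shows this last element is dominated by $\|x\|^{2}(b-\delta)_{+}$, which is Cuntz equivalent to $(b-\delta)_{+}$; by Theorem \ref{thm:2.1}$(4)$ (with cut-off $0$) and transitivity we conclude $a'\precsim(b-\delta)_{+}$.

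The left-hand subequivalence is where a little care is required, because the natural norm estimate is an equality rather than a strict inequality. Since $(b-\delta)_{+}\le b$ we have $a'\le a$, and by the functional calculus $0\le b-(b-\delta)_{+}\le\delta$, so that, passing to the unitization, $a-a'=x\big(b-(b-\delta)_{+}\big)x^{*}\le\delta\|x\|^{2}\,1=\varepsilon\,1$, i.e. $a\le a'+\varepsilon\,1$. Now I would apply Theorem \ref{thm:2.1}$(4)$ to $0\le a\le a'+\varepsilon\,1$ with cut-off $\varepsilon$, obtaining $(a-\varepsilon)_{+}\precsim\big(a'+\varepsilon\,1-\varepsilon\big)_{+}=a'$, which is exactly $(xbx^{*}-\varepsilon)_{+}\precsim x(b-\delta)_{+}x^{*}$. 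The only point needing a word of justification is that Cuntz subequivalence of elements of $A$ may be tested in the unitization, which is standard. Chaining the two steps yields the first displayed line.

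Finally, for the case $\|x\|\le1$ I would run the identical argument but cut $b$ at $\varepsilon$ rather than at $\delta$: since $\|x\|^{2}\le1$ one still gets $a-x(b-\varepsilon)_{+}x^{*}=x\big(b-(b-\varepsilon)_{+}\big)x^{*}\le\varepsilon\|x\|^{2}\,1\le\varepsilon\,1$, whence $(xbx^{*}-\varepsilon)_{+}\precsim x(b-\varepsilon)_{+}x^{*}$ by Theorem \ref{thm:2.1}$(4)$, and $x(b-\varepsilon)_{+}x^{*}\sim(b-\varepsilon)_{+}^{1/2}x^{*}x(b-\varepsilon)_{+}^{1/2}\le\|x\|^{2}(b-\varepsilon)_{+}\le(b-\varepsilon)_{+}$ exactly as before. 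I expect the only genuine obstacle to be this equality case in the norm estimate; the unitization trick combined with property $(4)$ handles it cleanly, without resorting to a limiting argument over the elements $(a-\varepsilon-\eta)_{+}$.
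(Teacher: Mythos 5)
Your proof is correct. Note, though, that the paper offers no proof of this statement to compare against: it is quoted verbatim as Lemma~2.3 of the cited reference \cite{AGJC}, so the only meaningful comparison is with the standard argument in the literature. That argument runs exactly along your lines (and indeed reads the middle term as $x(b-\varepsilon/\|x\|^2)_+x^*$, as you do): the right-hand subequivalence via $dd^*\sim d^*d$ for $d=x(b-\delta)_+^{1/2}$ together with $d^*d\leq\|x\|^2(b-\delta)_+$, and the left-hand one via the estimate $xbx^*\leq x(b-\delta)_+x^*+\varepsilon 1$ in $\widetilde{A}$ followed by the order property of $(\,\cdot\,-\varepsilon)_+$. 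Your decision to route the equality case $\|xbx^*-x(b-\delta)_+x^*\|=\varepsilon$ through Theorem~\ref{thm:2.1}$(4)$ applied to $a\leq a'+\varepsilon 1$, rather than through the strict-inequality form of Theorem~\ref{thm:2.1}$(1)$, is exactly the right move and is how the sharp version of this lemma is usually obtained. Two small points worth a sentence in a final write-up: the relation $dd^*\sim d^*d$ is the $\varepsilon=0$ case not literally covered by Theorem~\ref{thm:2.1}$(5)$ as stated (it follows from the standard isomorphism $\overline{d^*dAd^*d}\cong\overline{dd^*Add^*}$, or by letting $\varepsilon\downarrow 0$), and the passage from Cuntz subequivalence in $\widetilde{A}$ to subequivalence in $A$ deserves the one-line justification that $A$ is an ideal, hence hereditary, in $\widetilde{A}$ --- you flag both, which is enough.
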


The property  of   $m$-almost divisibility  was  introduced by Robert and Tikuisis in \cite{RT}.

\begin{definition}(\cite{RT}.)\label{def:2.3} Let $A$ be a $\rm C^*$-algebra.  Let $m\in \mathbb{N}$. We say that $A$ is  $m$-almost divisible if, for each $a\in {\rm M}_{\infty}(A)_+,$  for any integer $k\in \mathbb{N}$,  and any $\varepsilon>0,$ there exists $b\in{\rm M}_{\infty}(A)_+$ such that $k\langle b\rangle\leq \langle a\rangle$ and $\langle (a-\varepsilon)_+\rangle\leq (k+1)(m+1)\langle b\rangle$.
\end{definition}

The property  of   weak   ($n, m$)-almost divisibility  was  introduced by Elliott, Fan, and Fang in \cite{EFF}.
\begin{definition}(\cite{EFF}.)\label{def:2.4} Let $A$ be a $\rm C^*$-algebra. Let $n,~ m\in \mathbb{N}$. We say that $A$ is weakly   ($n, m$)-almost divisible if for each $a\in {\rm M}_{\infty}(A)_+,$  for any integer $k\in \mathbb{N}$,  and any $\varepsilon>0,$ there exists $ b\in{\rm M}_{\infty}(A)_+$ such that $k\langle b\rangle\leq \langle a\rangle+\langle a\rangle$ and $\langle (a-\varepsilon)_+\rangle\leq (k+1)(m+1)\langle b\rangle$.
\end{definition}

Note that if $A$ has the property of either Definition \ref {def:2.3} or
Definition \ref {def:2.4} then so also does any matrix algebra over $A$.

Hirshberg and  Orovitz  introduced  the notion of  tracial $\mathcal{Z}$-absorption  in \cite{HO}.
Also in \cite{HO}, Hirshberg and  Orovitz showed that tracial $\mathcal{Z}$-absorption is equivalent to $\mathcal{Z}$-absorption for any simple unital separable nuclear $\rm C^*$-algebra.

\begin{definition}\rm {(\cite{HO}.)}\label{def:2.5} We say a unital ${\rm C^*}$-algebra $A$ is tracially $\mathcal{Z}$-absorbing, if $A\neq {{\mathbb{C}}}$
and for any finite set $F\subseteq A,$  any $\varepsilon>0,$  any  non-zero positive element $a\in A$,
and any integer $n\in {{\mathbb{N}}},$ there exists  an order zero  completely positive contraction   map $\psi: {\rm M}_n\to A$, where  order zero means preserving  orthogonality, i.e.,
 $\psi(e)\psi(f)=0$ for all $e,f\in {\rm M}_n$ with $ef=0$, such that
the following properties hold:

$(1)$ $1-\psi(1)\precsim a,$ and

$(2)$ for any normalized element $x\in {\rm M}_n$ (i.e., with $\|x\|=1$) and any $y\in F$ we have $\|\psi(x)y-y\psi(x)\|<\varepsilon$.
\end{definition}

Inspired by Hirshberg and  Orovitz's tracial $\mathcal{Z}$-absorption for  simple unital
 ${\rm C^*}$-algebra,   Castillejos,  Li, and Szabo in \cite {CKS},  and property of Amini,  Golestani, Jamali, and Phillips in \cite{AGJC}, introduced the tracial $\mathcal{Z}$-absorption for a simple non-unital   ${\rm C^*}$-algebra.

\begin{definition}{\rm ({\cite{AGJC}, \cite{CKS}}.)}\label{def:2.6}  A simple  ${\rm C^*}$-algebra $A$ is tracially $\mathcal{Z}$-absorbing, if $A\neq {{\mathbb{C}}}$
and for any finite set $F\subseteq A,$ any  $\varepsilon>0,$   any non-zero pair of  positive element $a, b\in A$,
and any integer $n\in {{\mathbb{N}}},$ there is an order zero  completely positive contraction    map $\psi: {\rm M}_n\to A$ such that
the following properties hold:

$(1)$ $(b^2-b\psi(1)b-\varepsilon)_+\precsim a,$ and

$(2)$ for any normalized element $x\in {\rm M}_n$ (i.e., with $\|x\|=1$) and any $y\in F$ we have $\|\psi(x)y-y\psi(x)\|<\varepsilon$.
\end{definition}

Remark: By \cite{AGJC} or  \cite{CKS}, if $A$ is a  simple unital ${\rm C^*}$-algebra, then   Definition \ref{def:2.5} is equivalent to Definition \ref{def:2.6}.

Winter and Zacharias  introduced the notion of  nuclear dimension for  ${\rm C^*}$-algebras in \cite{WW3}.

\begin{definition}{\rm (\cite{WW3}.)}\label{def:2.7} Let $A$ be a ${\rm C^*}$-algebra and let $m\in {\mathbb{N}}$ be an integer.
 A completely  positive contraction  map $\varphi:F\to A$ is  $m$-decomposable (where $F$ is a
finite-dimensional  ${\rm C^*}$-algebra), if there is  a decomposition $F=F^{(0)}\oplus F^{(1)}
\oplus\cdots \oplus F^{(m)}$ such that the restriction $\varphi^{(i)}$ of $\varphi$ to $F^{(i)}$
has order zero (which means preserves orthogonality, i.e.,
 $\psi(e)\psi(f)=0$ for all $e, f\in {\rm M}_n$ with $ef=0$), for each $i\in \{0,\cdots,$$ m\}$.  $\varphi$ is said to be
$m$-decomposable with respect to  the decomposition $F=F^{(0)}\oplus F^{(1)}
\oplus\cdots \oplus F^{(m)}$.
 A has nuclear dimension $m$,  written ${\rm dim_{nuc}}(A)=m$, if $m$ is the least integer such that
the following condition holds: For any finite subset $G\subseteq A$ and $\varepsilon>0$, there is
a finite-dimensional completely  positive   approximation $(F,\varphi, \psi)$
for $G$ to within $\varepsilon$ (i.e., $F$ is finite-dimensional, $\psi: A\to F$  and
$\varphi:F\to A$ are completely positive,  and $\|\varphi\psi(b)-b\|<\varepsilon$
for any $b\in G$) such that $\psi$ is a   contraction, and $\varphi$ is $m$-decomposable with order zero
 completely positive contraction  components $\varphi^{(i)}$. If no such $m$ exists, we write
${\rm dim_{nuc}}(A)=\infty$.
 \end{definition}

Inspired by  Hirshberg and  Orovitz's  tracial $\mathcal{Z}$-absorption in \cite{HO}, Fu introduced
 a notion of  tracial nuclear dimension in his doctoral dissertation  \cite{FU} (see also  \cite{FL}).

\begin{definition}{\rm (\cite{FL}.)}\label{def:2.20}
		Let $A$ be a unital simple $\rm C^{*}$-algebra.  Let $n\in \mathbb{N}$ be an integer. $A$ is said to have tracial nuclear dimension at most $n$,
		denoted by ${\rm Tdim_{nuc}} A\leq n$, if for any finite subset $\mathcal{F}\subseteq A$, for any $\varepsilon>0$, and for any nonzero positive element $a\in A$, there exist a finite-dimensional $\rm C^{*}$-algebra $F$, a  completely positive contractive  map $\alpha:A\rightarrow F$,
		a nonzero piecewise contractive $n$-decomposable completely positive  map $\beta:F\rightarrow A$, and a completely positive contractive map
		$\gamma:A\rightarrow A\cap\beta^{\perp}(F)$ such that
		
		 $(1)$ $\|x-\gamma(x)-\beta\alpha(x)\|<\varepsilon$ for all $x\in\mathcal{F}$, and
		
		 $(2)$ $\gamma(1_{A})\precsim a$.
	\end{definition}
Note that tracial nuclear dimension at most $n$ is preserved  under tensoring with matrix algebras and under passing to unital hereditary $\rm C^{*}$-subalgebras.

\begin{definition}{\rm (\cite{FU}.)}\label{def:2.8}
	Let $A$ be a $\rm C^{*}$-algebra. Let $n\in \mathbb{N}$ be an integer. $A$ is said to have the second type of  tracial nuclear dimension
	at most $n$, and denoted by ${\rm T^{2}dim_{nuc}} A\leq n$, if for any finite positive subset $\mathcal{F}\subseteq A$, for any $\varepsilon>0$, and for any nonzero positive element $a\in A$, there exist a finite-dimensional $\rm C^{*}$-algebra $F=F_{0}\oplus\cdots\oplus F_{n}$ and completely positive  maps $\psi:A\rightarrow F$, $\varphi:F\rightarrow A$ such that
	
	$(1)$  for any $ x\in F$, there exists $x'\in A_{+}$, such that $x'\precsim_{A}a$,  and $\|x-x'-\varphi\psi(x)\|<\varepsilon$,
	
	$(2)$ $\|\psi\|\leq1$, and
	
	$(3)$ $\varphi|_{F_{i}}$ is a contractive completely positive  order zero map for $i=1, \cdots, n$.
	
\end{definition}

Note that the second type of  tracial nuclear dimension  at most $n$ is preserved  under tensoring with matrix algebras and under passing to unital hereditary $\rm C^{*}$-subalgebras.

Let $A$ be a unital $\rm C^{*}$-algebra. It is easy to see that
 ${\rm {Tdim_{nuc}}}(A)$ $\leq n$ implies   ${\rm {T^2dim_{nuc}}}(A)\leq n$.

 \begin{theorem}\label{thm:2.12}
 Let $\Omega$ be a class of ${\rm C^*}$-algebras. Let $A$ be a simple unital ${\rm C^*}$-algebra. Then $A\in {\rm gTA}\Omega$ (see Definition \ref{def:1.2}) if, and only if, for  every $\varepsilon>0$, every finite set $F\subseteq A$, and  every nonzero  positive element $a\in A$, there exist a ${\rm C^*}$-subalgebra $B$ of $A$ with $B\in \Omega$,  and a positive element $g\in B$ with $\|g\|\leq 1$, such that the following conditions are satisfied:

$(1)$ $\|xg-gx\|<\ep$, for all $x\in  F$,

$(2)$ $gx\in_\ep B, xg\in_\ep B$, for
all $x\in F$,

 $(3)$ $(1-g-\varepsilon)_+\precsim  a$, and

$(4)$ $\|gag\|\geq \|a\|-\varepsilon$.

 \end{theorem}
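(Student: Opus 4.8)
The plan is to prove the two implications separately. The key observation is that in the special case of a \emph{unital} simple $A$, Definition \ref{def:1.2} is stated for an arbitrary pair of positive elements $a,y$, whereas the asserted equivalent condition only involves the single positive element $a$; so the content of the theorem is that the general two-element condition (3), $(y^2-ygy-\varepsilon)_+\precsim a$, can be reduced to the special one-element condition (3$'$), $(1-g-\varepsilon)_+\precsim a$, by choosing $y=1$.

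First I would prove that the conditions of Theorem \ref{thm:2.12} imply membership in ${\rm gTA}\Omega$. So assume the one-element condition holds, and let $\varepsilon>0$, a finite set $F$, and a pair $a,y\geq 0$ with $a\neq 0$ be given. Without loss of generality I may assume $\|y\|\leq 1$ (rescaling $y$ only rescales $y^2-ygy$ by a positive constant, which is harmless for Cuntz subequivalence up to adjusting $\varepsilon$). Apply the hypothesis to the enlarged finite set $F\cup\{y\}$ and a suitably small tolerance $\varepsilon'$, obtaining $B\in\Omega$ and $g\in B_+$ with $\|g\|\leq 1$ satisfying (1), (2), and $(1-g-\varepsilon')_+\precsim a$. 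Conditions (1), (2), (4) of Definition \ref{def:1.2} are then immediate (they are verbatim the same). The main step is to deduce (3): I would write
\[
y^2-ygy=y(1-g)y\leq \|1-g\|^{1/2}\,(1-g)^{1/2}y^2(1-g)^{1/2}\cdot\text{(const)},
\]
more carefully, using $y(1-g)y=\bigl((1-g)^{1/2}y\bigr)^*\bigl((1-g)^{1/2}y\bigr)$ together with Theorem \ref{thm:2.2} and Theorem \ref{thm:2.1}(5) to obtain
\[
(y^2-ygy-\varepsilon)_+\precsim (1-g)^{1/2}y^2(1-g)^{1/2}\precsim (1-g-\varepsilon'')_+\precsim a
\]
for appropriate tolerances. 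The tolerance bookkeeping, turning $\varepsilon$ into $\varepsilon',\varepsilon''$ via the Cuntz comparison lemmas, is the technical heart here.

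For the converse, assume $A\in{\rm gTA}\Omega$ and deduce the one-element condition. Here I would simply specialize Definition \ref{def:1.2} by taking $y=1$ (available since $A$ is unital). Then condition (3) becomes $(1-g-\varepsilon)_+=(1^2-1\cdot g\cdot 1-\varepsilon)_+\precsim a$, which is exactly condition (3) of Theorem \ref{thm:2.12}; conditions (1), (2), (4) transfer directly. This direction is essentially immediate.

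The step I expect to be the main obstacle is the forward implication's derivation of $(y^2-ygy-\varepsilon)_+\precsim a$ from $(1-g-\varepsilon')_+\precsim a$ for \emph{arbitrary} $y$ rather than $y=1$. The subtlety is that $y$ need not commute with $g$, so one cannot directly factor; instead the argument must route through $(1-g)^{1/2}y^2(1-g)^{1/2}$ and invoke Theorem \ref{thm:2.2} to peel off the outer $y$'s at the cost of an $\varepsilon$-cutdown, then compare the result with $(1-g-\varepsilon'')_+$. Choosing the chain of tolerances so that every application of Theorem \ref{thm:2.1}(1),(4),(5) and Theorem \ref{thm:2.2} is legitimate, and so that the final subequivalence lands inside the given $\varepsilon$, is where the care is needed.
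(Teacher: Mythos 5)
Your proposal is correct and follows essentially the same route as the paper: the direction from ${\rm gTA}\Omega$ to the stated conditions is the specialization $y=1_A$, and the other direction is the one-line application of Theorem \ref{thm:2.2} (with $\|y\|\leq 1$) giving $(y^2-ygy-\varepsilon)_+=(y(1-g)y-\varepsilon)_+\precsim(1-g-\varepsilon)_+\precsim a$. The tolerance bookkeeping you flag as the main obstacle is in fact absorbed entirely by the second statement of Theorem \ref{thm:2.2}, which preserves the same $\varepsilon$, so no chain of auxiliary tolerances is needed.
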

 \begin{proof}
 $\Longrightarrow$:
 For  any  $\varepsilon>0$,
any finite set $F\subseteq A$, and  any pair of  positive elements $a,  1_{A}\in A$ with $a\neq 0$, since $A\in {\rm gTA}\Omega$, there exist a
${\rm C^*}$-subalgebra $B$ of $A$  with $B\in \Omega$,   and a  positive element  $g\in B$
with $\|g\|\leq 1$,
such that

$(1)$ $\|xg-gx\|<\ep$ for all $x\in  F$,

$(2)$ $gx\in_\ep B, xg\in_\ep B$ for
all $x\in  F$,

 $(3)$ $(1_{A}-g-\varepsilon)_+\precsim a$, and

$(4)$ $\|gag\|\geq \|a\|-\varepsilon$.

 $\Longleftarrow$: We must  show that
 for any $\varepsilon>0$, any finite set $F\subseteq A$, and any  positive elements $a, y\in A$ with $a\neq 0$,
  there exist a
${\rm C^*}$-subalgebra $D$ of $A$ with  $D\in \Omega$,  and a  positive element
$g\in D$ with $\|g\|\leq 1$, such that

$(1)$ $\|xg-gx\|<\ep$ for all $x\in  F$,

$(2)$ $gx\in_\ep B, xg\in_\ep D$ for
all $x\in F$,

 $(3)$ $(y^{2}-ypy-\varepsilon)_{+}\precsim _{A} a$, and

 $(4)$ $\|gag\|\geq \|a\|-\varepsilon$.

  For given  $\varepsilon>0$,
 and positive elements  $y, a\in A_+$ with $a\neq 0,$  and as, we may assume, $\|y\|\leq 1$,  by the known conditions,  there exist a
${\rm C^*}$-subalgebra $D$ of $A$  with $D\in \Omega$,
  and a  positive element  $g\in D$ with $\|g\|\leq 1$,
such that

$(1)$ $\|xg-gx\|<\ep$ for all $x\in  F$,

$(2)$ $gx\in_\ep D, xg\in_\ep D$ for
all $x\in  F$,

 $(3)'$ $(1-g-\varepsilon)_+\precsim a$, and

 $(4)$ $\|gag\|\geq \|a\|-\varepsilon$.

By  Theorem 2.2 and $(3)'$, one has

$(3)$ $(y^2-ygy-\varepsilon)_+\precsim (1-g-\varepsilon)_+\precsim a$.
\end{proof}

 \begin{theorem}\label{thm:2.13}
 Let $\Omega$ be a class of ${\rm C^*}$-algebras. Let $A$ be a simple unital ${\rm C^*}$-algebra. Then $A\in {\rm gTA}\Omega$ (see Definition \ref{def:1.2}) if, and only if, for  any $\varepsilon>0$, every finite set $F\subseteq A$, and for every nonzero  positive element $a\in A$, there exist a ${\rm C^*}$-subalgebra $B$ of $A$ with $B\in \Omega$,   and a positive element $g\in B$ with $\|g\|\leq 1$, such that the following conditions hold:

$(1)$ $\|xg-gx\|<\ep$ for all $x\in  F$,

$(2)$ $gx\in_\ep B, xg\in_\ep B$ for
all $x\in  F$,

 $(3)$ $1-g\precsim  a$, and

$(4)$ $\|gag\|\geq \|a\|-\varepsilon$.

 \end{theorem}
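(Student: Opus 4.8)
The plan is to read this off from Theorem \ref{thm:2.12}: the two characterizations differ only in condition $(3)$, which is here sharpened from $(1-g-\varepsilon)_+\precsim a$ to $1-g\precsim a$. The backward implication is immediate. Assuming the condition of the present statement, fix $\varepsilon>0$, a finite set $F$, and a nonzero positive $a$, and produce $B\in\Omega$ and a positive $g\in B$ with $\|g\|\le 1$ satisfying $(1)$, $(2)$, $(4)$ and $1-g\precsim a$. Since $(1-g-\varepsilon)_+\precsim 1-g$ for every positive element (the function $(t-\varepsilon)_+$ has cozero set contained in that of $t$), transitivity of $\precsim$ gives $(1-g-\varepsilon)_+\precsim a$. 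Thus $(B,g)$ fulfils the hypotheses of Theorem \ref{thm:2.12}, whence $A\in{\rm gTA}\Omega$.

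For the forward implication I would start from Theorem \ref{thm:2.12} and improve the comparison by a functional-calculus modification of $g$. Assume $A\in{\rm gTA}\Omega$ and fix $\varepsilon>0$, a finite set $F$, and a nonzero positive $a$. Pick $\varepsilon'>0$ to be specified at the end, and apply Theorem \ref{thm:2.12} to $\varepsilon'$, $F$ and $a$ to obtain $B\in\Omega$ and a positive $g\in B$ with $\|g\|\le 1$, $\|xg-gx\|<\varepsilon'$ and $gx,xg\in_{\varepsilon'}B$ for $x\in F$, together with $(1-g-\varepsilon')_+\precsim a$ and $\|gag\|\ge\|a\|-\varepsilon'$. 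The idea is to push $g$ up to $1$ exactly where it already exceeds $1-\varepsilon'$, thereby deleting the part of $1-g$ smaller than $\varepsilon'$ which blocks the sharp comparison. So let $\phi\colon[0,1]\to[0,1]$ be continuous with $\phi(t)=t$ for $t\le 1-2\varepsilon'$, $\phi(t)=1$ for $t\ge 1-\varepsilon'$, and $\phi$ affine in between, and set $g'=\phi(g)$. Since $\phi(0)=0$ we have $g'\in C^*(g)\subseteq B$, clearly $0\le g'\le 1$, and $|\phi(t)-t|\le\varepsilon'$ forces $\|g'-g\|\le\varepsilon'$.

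The three ``soft'' conditions are stable under this small perturbation: from $\|g'-g\|\le\varepsilon'$ one gets $\|xg'-g'x\|\le\|xg-gx\|+2\varepsilon'\|x\|$, $g'x,xg'\in_{\varepsilon'(1+2\|x\|)}B$, and $\|g'ag'\|\ge\|gag\|-2\varepsilon'\|a\|$, all of which fall below $\varepsilon$ once $\varepsilon'$ is taken small relative to $\varepsilon$, $\max_{x\in F}\|x\|$ and $\|a\|$. The decisive point, and the step I expect to be the main obstacle, is condition $(3)$. Here $1-g'=(1-\phi)(g)$, and by construction $(1-\phi)(t)>0$ precisely for $t<1-\varepsilon'$, which is exactly where $(1-t-\varepsilon')_+>0$; hence the two functions of $g$ have the same cozero set in $\sigma(g)$. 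By the standard Cuntz comparison for positive functions of a single element — for each $\eta>0$ the compact set $\{t:(1-\phi)(t)\ge\eta\}$ lies inside the open set $\{t:(1-t-\varepsilon')_+>0\}$ — this yields $1-g'\precsim(1-g-\varepsilon')_+\precsim a$. Thus $(B,g')$ verifies $(1)$–$(4)$ of the present statement, and everything beyond this comparison is routine norm bookkeeping; the upward nudge by $\phi$ is precisely what aligns the two cozero sets and buys the sharp estimate.
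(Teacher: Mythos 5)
Your proposal is correct and follows essentially the same route as the paper: the backward direction via $(1-g-\varepsilon)_+\precsim 1-g$, and the forward direction by applying Theorem \ref{thm:2.12} with a smaller tolerance and then replacing $g$ by $\phi(g)$ for a continuous $\phi$ equal to $1$ near $1$ and close to the identity. The only (harmless) difference is the choice of $\phi$: the paper takes $f(\lambda)=(1-\delta)^{-1}\lambda$ on $[0,1-\delta]$ so that $1-f(g')$ is literally a scalar multiple of $(1-g'-\delta)_+$, whereas you keep $\phi$ equal to the identity on $[0,1-2\varepsilon']$ and obtain the comparison $1-\phi(g)\precsim(1-g-\varepsilon')_+$ from the cozero-set criterion in $C^*(1,g)$; both arguments are valid.
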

 \begin{proof}
 $\Longleftarrow$: This follows from  Theorem \ref{thm:2.12}, and the Cuntz subequivalance  $(1-g-\varepsilon)_+\precsim 1-g$.

 $\Longrightarrow$: We must  show that
 for any $\varepsilon>0$, every finite set $F\subseteq A$ (we assume that $\|x\|\leq 1$ for all $x\in F$), and any positive element $a\in A$ with $a\neq 0$,
  there exist a
${\rm C^*}$-subalgebra $B$ of $A$ with $B\in \Omega$,  and a  positive element  $g\in B$
with $\|g\|\leq 1$,
such that

$(1)$ $\|xg-gx\|<\ep$ for all $x\in  F$,

$(2)$ $gx\in_\ep B, xg\in_\ep B$ for
all $x\in F$,

 $(3)$ $1-g\precsim a$, and

 $(4)$ $\|gag\|\geq \|a\|-\varepsilon$.

  For any   $\delta>0$,
  any finite set $F\subseteq A$, and any  positive  element  $a\in A$ with  $a\neq 0,$  since
   $A\in {\rm gTA}\Omega$, by Theorem \ref{thm:2.12},  there exist a
${\rm C^*}$-subalgebra $B$ of $A$  with $B\in \Omega$,  and a  positive element  $g'\in B$ with $\|g'\|\leq 1$ such that

$(1)'$ $\|xg'-g'x\|<\delta$ for all $x\in  F$,

$(2)'$ $g'x\in_\delta D, xg'\in_\delta D$ for
all $x\in  F$,

 $(3)'$ $(1-g'-\delta)_+\precsim a$, and

 $(4)'$ $\|g'ag'\|\geq \|a\|-\delta$.

 Denote by  $f:[0,1]\rightarrow[0,1]$  the continuous function defined by
\begin{eqnarray*}
	f(\lambda)=
	\begin{cases}
		(1-\delta)^{-1}\lambda,& 0\leqslant\lambda\leqslant 1-\delta, \\
		1,& 1-\delta<\lambda\leqslant1.
	\end{cases}
\end{eqnarray*}
We take $g=f(g')$. Then we have

$(5)'$ $1-g=(1-g'-\delta)_+$, and

$(6)'$ $\|g-g'\|<\delta$.

By $(1)'$, $(2)'$, $(3)'$, $(4)'$, $(5)'$, and  $(6)'$, we have

$(1)$ $\|xg-gx\|\leq \|xg-xg'\|+\|g'x-xg'\|+\|xg'-xg\|<3\delta$ for all $x\in  F$,

$(2)$ $gx\in_{2\delta} B, xg\in_{2\delta} B$ for
all $x\in  F$,

 $(3)$ $1-g=(1-g'-\delta)_+\precsim a$, and

 $(4)$ $\|gag\|\geq \|g'ag'\|-2\delta\geq  \|a\|-3\delta$.

\end{proof}

\begin{theorem}\label{thm:2.14}
 Let $\Omega$ be a class of  unital ${\rm C^*}$-algebras.  Let $A$ be a simple unital ${\rm C^*}$-algebra. Then $A\in {\rm gTA}\Omega$  (see Definition \ref{def:1.2}) if, and only if, $A\in {\rm WTA}\Omega$ (see Definition \ref{def:1.1}).
 \end{theorem}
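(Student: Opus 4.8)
The plan is to prove the equivalence by showing that each approximation structure can be converted into the other, using Theorem \ref{thm:2.13} to replace the awkward Cuntz-comparison condition $(y^2 - ygy - \varepsilon)_+ \precsim a$ of Definition \ref{def:1.2} by the cleaner condition $1 - g \precsim a$. The essential difference between the two definitions is that ${\rm WTA}\Omega$ (Definition \ref{def:1.1}) uses a projection $p$ together with an element $g$ and a \emph{unital} subalgebra $B$ with $1_B = p$, and the distinguished positive element is $p - g$; whereas ${\rm gTA}\Omega$ (Definition \ref{def:1.2}, as reformulated in Theorem \ref{thm:2.13}) uses only a single positive contraction $g \in B$. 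So the natural identification is to match the ${\rm WTA}$ element $p - g$ with the ${\rm gTA}$ element $g$, renaming to avoid confusion. Note both definitions already share the norm condition (item $(4)$) and the commutation/approximate-containment conditions (items $(1)$ and $(2)$), modulo the bookkeeping of whether one writes $g$ or $p-g$.

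For the direction $A \in {\rm WTA}\Omega \Rightarrow A \in {\rm gTA}\Omega$, I would start from a ${\rm WTA}$ decomposition with projection $p$, element $g_0$ (the ${\rm WTA}$ $g$), unital $B \in \Omega$, and set the candidate ${\rm gTA}$ element to be $h := p - g_0 \in B$, which is a positive contraction. Conditions $(1)$ and $(2)$ of Definition \ref{def:1.2} then follow directly from conditions $(1)$ and $(2)$ of Definition \ref{def:1.1}, and condition $(4)$ is immediate since $\|hah\| = \|(p-g_0)a(p-g_0)\| \geq \|a\| - \varepsilon$. The comparison condition is where the two formulations meet: Definition \ref{def:1.1}$(3)$ gives $1 - (p - g_0) = 1 - h \precsim a$, which is exactly condition $(3)$ of Theorem \ref{thm:2.13}. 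By that theorem this suffices to place $A$ in ${\rm gTA}\Omega$.

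For the converse $A \in {\rm gTA}\Omega \Rightarrow A \in {\rm WTA}\Omega$, I would invoke Theorem \ref{thm:2.13} to obtain, for given $\varepsilon$, $F$, and $a$, a unital subalgebra $B \in \Omega$ and a positive contraction $g \in B$ satisfying $(1)$, $(2)$, $1 - g \precsim a$, and $(4)$. To manufacture the ${\rm WTA}$ data I would take the projection to be $p := 1_B$, the unit of $B$, and set the ${\rm WTA}$ element to be $p - g = 1_B - g$. Since $g \leq 1_B = p$ (as $g \in B$ is a contraction), the element $p - g$ is a positive contraction lying in $B$, and conditions $(1)$, $(2)$, $(4)$ of Definition \ref{def:1.1} transfer directly from the corresponding ${\rm gTA}$ conditions with $p - g$ in place of the ${\rm WTA}$ $g$. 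For condition $(3)$ of Definition \ref{def:1.1}, I need $1 - (p - g) \precsim a$; but $1 - (p - g) = 1 - 1_B + g$, which is \emph{not} the same as $1 - g$ unless $1_B = 1_A$, i.e. $B$ is unital as a subalgebra of $A$ in the strict sense $1_B = 1_A$.

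The main obstacle, and the point requiring genuine care, is precisely this matching of units: ${\rm gTA}$ as formulated in Theorem \ref{thm:2.13} only controls $1_A - g$, whereas the ${\rm WTA}$ structure of Definition \ref{def:1.1} demands that $B$ carry a \emph{projection} $p = 1_B$ and compares against $1_A - (p - g)$. I expect the resolution to run through the observation that in the ${\rm gTA}$ setting with $\Omega$ a class of \emph{unital} algebras one may arrange, or reduce to the case, that the subalgebra $B$ sits inside $A$ with a projection unit $1_B$, and then re-express $g$ relative to $1_B$ rather than $1_A$. Concretely I would replace $g$ by $g' := 1_B g 1_B$ (the compression into $B$, which equals $g$ since $g \in B$) and identify the ${\rm WTA}$ quantities as $p = 1_B$ and ${\rm WTA}$-element $= 1_B - g$; the comparison $1_A - (1_B - g) \precsim a$ must then be recovered from $1_A - g \precsim a$ together with the fact that $1_A - 1_B$ is itself dominated by the available ``small'' part. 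Handling this last subequivalence carefully — splitting $1_A - g = (1_A - 1_B) + (1_B - g)$ and absorbing both summands into the comparison with $a$ using the Cuntz-additivity and Theorem \ref{thm:2.1} — is the step I would budget the most effort for, since everything else is a direct transcription between the two lists of conditions.
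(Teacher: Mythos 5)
Your direction ${\rm WTA}\Omega \Rightarrow {\rm gTA}\Omega$ is correct and is exactly the paper's argument: set $h := p - g_0$ and apply Theorem \ref{thm:2.13}. The converse is where the proposal goes wrong, and the ``main obstacle'' you budget the most effort for is an artifact of a bookkeeping slip rather than a real difficulty. From Theorem \ref{thm:2.13} you get a unital $B \in \Omega$, whose unit $1_B$ is a projection of $A$, and a positive contraction $g \in B$ with $1_A - g \precsim a$. The correct dictionary is: $p := 1_B$, and the element called $g$ in Definition \ref{def:1.1} is $g_0 := 1_B - g$, so that the distinguished element $p - g_0$ of Definition \ref{def:1.1} equals $g$ itself. (This is precisely what the paper does; it writes ``$1_B - g = g'$''.) Conditions $(1)$, $(2)$, $(4)$ of Definition \ref{def:1.1} are then verbatim the corresponding conditions of Theorem \ref{thm:2.13}, and condition $(3)$ reads $1_A - (p - g_0) = 1_A - g \precsim a$, which is exactly Theorem \ref{thm:2.13}$(3)$. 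The projection $1_A - 1_B$ never enters.

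Your computation $1 - (p-g) = 1 - 1_B + g$ instead substitutes the ${\rm gTA}$ element for the Definition \ref{def:1.1} element, i.e., it corresponds to the opposite identification $g_0 := g$, $p - g_0 = 1_B - g$ --- the reverse of the identification you announced one sentence earlier when asserting that $(1)$, $(2)$, $(4)$ ``transfer directly''. Under that reversed identification the argument really does break, and not only at $(3)$: conditions $(1)$ and $(2)$ would then concern $1_B - g$, so you would need control of $\|1_B x - x 1_B\|$ for $x \in F$, which you do not have; and $(3)$ would require $(1_A - 1_B) + g \precsim a$, which is hopeless because $g$ is the ``large'' element ($\|gag\| \geq \|a\| - \varepsilon$ forces $\|g\|$ close to $1$, and such a $g$ cannot be Cuntz-dominated by an arbitrary non-zero $a$). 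The repair you sketch --- splitting $1_A - g = (1_A - 1_B) + (1_B - g)$ and absorbing both summands into $a$ --- targets the wrong quantity (the troublesome summand of $1_A - (1_B - g)$ is $g$, not $1_B - g$) and is unnecessary once the identification is set straight. With that one correction the converse collapses to the paper's one-line deduction from Theorem \ref{thm:2.13}.
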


 \begin{proof}
 $\Longrightarrow$:
  We must show that  for any
 $\varepsilon>0$, any finite
subset $F\subseteq A$, and any  non-zero element $a\geq 0$, there
exist a  projection $p\in A$, an  element $g\in A$ with $0\leq g\leq 1$,
  and a unital ${\rm C^*}$-subalgebra $B$ of $A$ with
$g\in B,~ 1_B=p$, and $B\in \Omega$, such that

$(1)$  $(p-g)x\in_{\varepsilon} B, ~ x(p-g)\in_{\varepsilon} B$ for all $x\in  F$,

$(2)$ $\|(p-g)x-x(p-g)\|<\varepsilon$ for all $x\in F$,

$(3)$ $1-(p-g)\precsim a$, and

$(4)$ $\|(p-g)a(p-g)\|\geq \|a\|-\varepsilon$.

 For  any  $\varepsilon>0$,
 any  finite set $F\subseteq A$, and  any pair of positive elements $a, 1\in A_{+}$ with $a\neq 0$, since $A\in {\rm gTA\Omega}$, by Theorem \ref{thm:2.12}, there exist a
${\rm C^*}$-subalgebra $B$ of $A$ with $B\in \Omega$,
  and a  positive element  $g'\in B$ with $\|g'\|\leq 1$, such that

$(1)'$ $\|xg'-g'x\|<\ep$ for all $x\in  F$,

$(2)'$ $g'x\in_\ep B, xg'\in_\ep B$ for
all $x\in  F$,

 $(3)'$ $(1-g'-\varepsilon)_+\precsim a$, and

$(4)'$ $\|g'ag'\|\geq \|a\|-\varepsilon$.

We take $1_B-g=g'$. By $(1)'$, $(2)'$,  $(3)'$ and $(4)'$, we have

$(1)$  $(1_B-g)x\in_{\varepsilon} B,  x(1_B-g)\in_{\varepsilon} B$ for all $x\in  F$,

$(2)$ $\|(1_B-g)x-x(1_B-g)\|<\varepsilon$ for all $x\in F$,

$(3)$ $1_A-(1_B-g)\precsim a$, and

$(4)$ $\|(1_B-g)a(1_B-g)\|\geq \|a\|-\varepsilon$.

 $\Longleftarrow$:  This follows form Theorem \ref{thm:2.13}.
\end{proof}

The following  theorem is  Lemma 1.7 of \cite{AJN}.
\begin{theorem}{\rm (\cite{AJN}.)}\label{thm:2.15}
For any $\varepsilon>0$ there exists $\delta>0$ such that the following statement holds. Let $A$ be a
${\rm C^*}$-algebra,  $B\subseteq A$ be a ${\rm C^*}$-subalgebra,
 $n$ be  a non-zero integer, $\varphi_0:{\rm M}_n\to A$ be an order zero  completely positive contraction, and  $x\in B$ such that

$(1)$ $0\leq x\leq 1$,

$(2)$ with $(e_{j,k}), j, k=1, 2, \cdots,  n$  the standard system of matrix units for ${\rm M}_n$, we have
$\|\varphi_0(e_{j,k})x-x\varphi_0(e_{j,~k})\|<\varepsilon$, for  $j,~k=1,~2,~\cdots,~ n$, and

$(3)$ $\varphi_0(e_{j,~k})x\in_{\varepsilon}B$.

Then there is an order zero  completely positive contraction  map $\varphi: {\rm M}_n\to B$ such that for all
$z\in {\rm M}_n$, with $\|z\|\leq 1$, one has $\|\varphi_0(z)x-\varphi(z)\|<\varepsilon$.
\end{theorem}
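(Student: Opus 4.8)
The plan is to build $\varphi$ by compressing $\varphi_0$ by $x^{1/2}$ and then correcting the result, which is only \emph{approximately} order zero and only \emph{approximately} valued in $B$, to an honest order zero contraction with range in $B$. Throughout I write $\delta$ for the tolerance appearing in hypotheses $(2)$ and $(3)$, to be fixed small (depending on $\varepsilon$ and on $n$) at the end. The candidate is $\psi(z)=x^{1/2}\varphi_0(z)x^{1/2}$ for $z\in {\rm M}_n$. Since $x\in B$, $x\ge 0$, and $t\mapsto t^{1/2}$ is continuous and vanishes at $0$, we have $x^{1/2}\in B$; and since $\varphi_0$ is a completely positive contraction and $x^{1/2}$ is a contraction, $\psi$ is completely positive with $\|\psi\|=\|x^{1/2}\varphi_0(1)x^{1/2}\|\le 1$. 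Thus $\psi$ is a legitimate completely positive contraction, and the whole argument consists in upgrading its two approximate properties to exact ones.

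First I would transfer the commutation hypothesis from $x$ to $x^{1/2}$: because $0\le x\le 1$ and $\|\varphi_0(e_{j,k})x-x\varphi_0(e_{j,k})\|<\delta$, a standard functional calculus estimate (uniformly approximating $t\mapsto t^{1/2}$ on $[0,1]$ by polynomials vanishing at $0$) makes $\|\varphi_0(e_{j,k})x^{1/2}-x^{1/2}\varphi_0(e_{j,k})\|$ as small as desired once $\delta$ is small. With this, for orthogonal matrix units one has $\psi(e)\psi(f)=x^{1/2}\varphi_0(e)\,x\,\varphi_0(f)x^{1/2}$; commuting $x$ past $\varphi_0(f)$ and using $\varphi_0(e)\varphi_0(f)=0$ shows $\psi$ is order zero up to an error governed by $\delta$. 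Likewise $\psi(e_{j,k})=x^{1/2}\varphi_0(e_{j,k})x^{1/2}$ is, after commuting one factor $x^{1/2}$ across, within a $\delta$-controlled distance of $\varphi_0(e_{j,k})x$, which lies within $\delta$ of $B$ by $(3)$. Choosing $b_{j,k}\in B$ close to $\psi(e_{j,k})$, and then replacing $b_{j,k}$ by $\tfrac12(b_{j,k}+b_{k,j}^{*})$ and $b_{j,j}$ by its positive part, yields a family in $B$ satisfying the defining relations of the cone over ${\rm M}_n$ to within an error governed by $\delta$.

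The decisive step is to promote this approximate order zero map to an exact one with range in $B$. Here I would use the bijective correspondence between completely positive contractive order zero maps ${\rm M}_n\to B$ and $*$-homomorphisms of the cone $C_0((0,1])\otimes {\rm M}_n$ into $B$, together with the fact that this cone is a projective ${\rm C^*}$-algebra, so that its relations are stable: the symmetrized family $\{b_{j,k}\}\subseteq B$, being an approximate representation of the cone inside $B$, lies close to a genuine $*$-homomorphism into $B$, which furnishes an order zero completely positive contraction $\varphi:{\rm M}_n\to B$ with each $\varphi(e_{j,k})$ close to $b_{j,k}$, hence to $\psi(e_{j,k})$. I expect this to be the main obstacle, since it is exactly where ``approximate'' must become ``exact'' while the range is simultaneously forced into $B$ and quantitative control is retained; it is the one place that cannot be handled by a direct norm estimate but must invoke a genuine stability (semiprojectivity) property of order zero maps.

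Finally I would assemble the estimates. For a contraction $z=\sum_{j,k}z_{j,k}e_{j,k}$ with $|z_{j,k}|\le 1$, the identity $\psi(z)-\varphi_0(z)x=[x^{1/2},\varphi_0(z)]x^{1/2}$ gives $\|\psi(z)-\varphi_0(z)x\|\le n^{2}\max_{j,k}\|[x^{1/2},\varphi_0(e_{j,k})]\|$, while $\|\varphi(z)-\psi(z)\|\le n^{2}\max_{j,k}\|\varphi(e_{j,k})-\psi(e_{j,k})\|$; both right-hand sides are controlled by $\delta$. Tracing these bounds back, one fixes at the outset $\delta>0$ small enough (depending on $\varepsilon$ and, through the $n^{2}$ factors, on $n$) so that $\|\varphi_0(z)x-\varphi(z)\|<\varepsilon$ for every contraction $z\in {\rm M}_n$, which is the assertion.
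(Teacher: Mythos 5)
The paper gives no proof of this statement; it is quoted verbatim (as Lemma~1.7) from Archey--Buck--Phillips \cite{AJN}, and your argument is essentially a reconstruction of the proof given there: multiply $\varphi_0$ by $x$ (you compress by $x^{1/2}$, a harmless variant), pass the approximate commutation from $x$ to $x^{1/2}$ by functional calculus, observe that the resulting family approximately satisfies the defining relations of the cone $C_0((0,1])\otimes {\rm M}_n$ inside $B$, and invoke the weak stability of those relations (equivalently, projectivity of the cone, via the correspondence between order zero completely positive contractions and $*$-homomorphisms of the cone) to correct to an exact order zero map into $B$ -- which is indeed the one step that cannot be done by bare norm estimates, exactly as you flag. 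The only caveat is quantifier placement: your $\delta$ unavoidably depends on $n$ (through the $n^2$ factors and the stability constant of the cone over ${\rm M}_n$), which matches the original lemma in \cite{AJN}, where $n$ is fixed before $\delta$ is chosen, whereas the transcription above quantifies $n$ after $\delta$; that is a defect of the transcription, not of your proof.
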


\section{The main results}

\begin{theorem}\label{thm:3.1}Let $\Omega$ be a class of
${\rm C^*}$-algebras which  are tracially $\mathcal{Z}$-absorbing (in the sense of Definition \ref {def:2.6}).  Then  $A$  is  tracially $\mathcal{Z}$-absorbing  for  any non-elementary  simple   ${\rm C^*}$-algebra $A\in{\rm  gTA}\Omega.$
\end{theorem}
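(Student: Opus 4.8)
The plan is to verify the defining conditions of tracial $\mathcal{Z}$-absorption (Definition \ref{def:2.6}) for $A$ directly, using the generalized tracial approximation structure of $A$ together with the tracial $\mathcal{Z}$-absorption of the approximating subalgebras $B\in\Omega$. So I fix a finite set $F\subseteq A$, a tolerance $\varepsilon>0$, a nonzero pair of positive elements $a,b\in A$, and an integer $n\in\mathbb{N}$, and I must produce an order zero completely positive contraction $\psi\colon {\rm M}_n\to A$ satisfying $(b^2-b\psi(1)b-\varepsilon)_+\precsim a$ and the approximate-commutation estimate against $F$. The key idea is a two-stage approximation: first use $A\in{\rm gTA}\Omega$ to capture most of $A$ inside a subalgebra $B\in\Omega$ (up to a small remainder controlled by $a$), then apply the tracial $\mathcal{Z}$-absorption of $B$ to get an order zero map into $B$, and finally transport/correct that map back into $A$ using the closeness provided by the approximation.

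First I would apply Definition \ref{def:1.2} with the finite set $F$ (enlarged to control the subsequent estimates), the positive element $a$, and $y=b$. This yields $B\in\Omega$ and a positive contraction $g\in B$ with $\|xg-gx\|$ small for $x\in F$, with $gx,xg\in_{\delta}B$, with $(b^2-bgb-\varepsilon')_+\precsim a$ for suitable small $\varepsilon'$, and with $\|gag\|\geq\|a\|-\delta$. Intuitively $g$ plays the role of an approximate unit for the part of $A$ that sits inside $B$, and condition $(3)$ is exactly the Cuntz-domination I will need in the end. The elements $gxg$ (for $x\in F$) are, up to $\delta$, elements of $B$; I would replace them by genuine elements $\tilde{x}\in B$ with $\|gxg-\tilde x\|<\delta$, obtaining a finite set $\tilde F\subseteq B$. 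I then invoke the tracial $\mathcal{Z}$-absorption of $B$ (Definition \ref{def:2.6} applied inside $B$, with the positive element $g$ itself playing the role of the "$b$" there, and with a small nonzero positive element of $B$ dominated by $a$ as the "$a$") to obtain an order zero completely positive contraction $\psi_0\colon{\rm M}_n\to B$ that approximately commutes with $\tilde F$ and satisfies a domination estimate of the form $(g^2-g\psi_0(1)g-\varepsilon'')_+\precsim (\text{something}\precsim a)$.

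The final and most delicate step is to combine these. The composite $\psi_0$ maps into $B\subseteq A$, so it is already an order zero completely positive contraction into $A$; approximate commutation of $\psi_0$ with $F$ (rather than with $\tilde F$) follows because $g$ approximately commutes with $F$ and $\psi_0(z)$ lives in $gBg$-like position, so $\psi_0(z)x\approx\psi_0(z)gxg\approx gxg\,\psi_0(z)\approx x\psi_0(z)$ for normalized $z$; this is where I must be careful to insert the $g$'s and track how the order zero structure interacts with the cutdowns. For the domination condition $(b^2-b\psi_0(1)b-\varepsilon)_+\precsim a$, I would use that $b\psi_0(1)b$ is close to $bgb$ on the relevant subspace together with Theorem \ref{thm:2.2} (the $(xbx^*-\varepsilon)_+\precsim(b-\varepsilon)_+$ estimates) to reduce $(b^2-b\psi_0(1)b-\varepsilon)_+$ to a Cuntz sum of $(b^2-bgb-\varepsilon')_+$ and the defect of $\psi_0(1)$ against $g$ inside $B$, each of which is separately subequivalent to a piece of $a$; the simplicity of $A$ lets me split $a$ into two orthogonal nonzero positive pieces $a_1,a_2$ with $a_1+a_2\precsim a$ and dominate each defect by one of them.

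The main obstacle I anticipate is the bookkeeping in this last combination: the "$b$" in $A$'s absorption condition is a global element, whereas the absorption of $B$ only controls things after compressing by $g$, so I must argue that outside the support of $g$ the contribution $b^2-b\psi_0(1)b$ is already absorbed by the approximation remainder $(b^2-bgb-\varepsilon')_+\precsim a$ from the first stage. Making the two domination estimates compose with a single final $\varepsilon$ — rather than an accumulated error — requires choosing the intermediate tolerances $\delta,\varepsilon',\varepsilon''$ in the right order (each depending on the later ones) and repeatedly invoking Theorem \ref{thm:2.1}$(1)$ and Theorem \ref{thm:2.2} to absorb perturbations into the $(\,\cdot\,-\varepsilon)_+$ brackets.
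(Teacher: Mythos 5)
Your overall strategy coincides with the paper's: split the comparison element into orthogonal pieces using simplicity and non-elementarity, use the ${\rm gTA}\Omega$ structure to find $B$ and $g$, invoke the tracial $\mathcal{Z}$-absorption of $B$ with $g$ in the role of the ``$b$'' of Definition \ref{def:2.6}, and then combine the two Cuntz estimates. But there is a genuine gap at the transport step. You propose to take the order zero map $\psi_0\colon {\rm M}_n\to B$ itself as the final map and to derive approximate commutation with $F$ from the chain $\psi_0(z)x\approx\psi_0(z)gxg\approx gxg\,\psi_0(z)\approx x\psi_0(z)$. The first link is false: $x-gxg$ is not small (it is roughly $(1_{\widetilde A}-g)^{1/2}x(1_{\widetilde A}-g)^{1/2}$ plus commutator errors), and $\psi_0(z)$ is merely some element of $B$ with no reason to be localized under $g$, so the product $\psi_0(z)\bigl((1_{\widetilde A}-g)^{1/2}x(1_{\widetilde A}-g)^{1/2}\bigr)$ is completely uncontrolled --- tracial $\mathcal{Z}$-absorption of $B$ only gives commutation against the finite subset of $B$ you feed it, not against elements of $A$ outside $B$. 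The same defect infects the domination estimate: $b\psi_0(1)b$ need not be close to $bgb$, since $\psi_0(1)$ is only comparable to $g$ \emph{after} compression by $g$. The missing ingredient is the correction lemma, Theorem \ref{thm:2.15} (Lemma 1.7 of \cite{AJN}), which the paper uses to replace $\psi_0$ by an honest order zero completely positive contraction $\psi$ with $\|\psi(z)-\psi_0(z)g\|<\varepsilon$; the extra factor of $g$ is exactly what lets the commutator against $(1_{\widetilde A}-g)^{1/2}a_i(1_{\widetilde A}-g)^{1/2}$ be pushed through (via $g(1_{\widetilde A}-g)a_i\approx(1_{\widetilde A}-g)g^{1/2}a_ig^{1/2}\approx(1_{\widetilde A}-g)a_i'$ with $a_i'\in B$) and what replaces $\psi(1)$ by $g^{1/2}\psi_0(1)g^{1/2}$ in the Cuntz estimate. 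Without such a lemma your argument does not close.

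A secondary point: you say you would feed $B$'s tracial $\mathcal{Z}$-absorption ``a small nonzero positive element of $B$ dominated by $a$.'' Cuntz domination by a prescribed element is not automatic in a simple ${\rm C^*}$-algebra, so such an element must be constructed; the paper does this by taking $b'''\in B$ close to $g^{1/2}b''g^{1/2}$ (where $b''$ is one of the orthogonal pieces) and uses condition $(4)$ of Definition \ref{def:1.2}, $\|gb''g\|\geq\|b''\|-\varepsilon'$, precisely to guarantee $(b'''-\varepsilon)_+\neq 0$. Your sketch never invokes condition $(4)$, and without it this step could degenerate.
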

\begin{proof}
We must show that
for any finite set $F=\{a_1, a_2, \cdots,  $ $a_k\}\subseteq A,$  any $\varepsilon>0,$  any pari of positive elements $a, b \in A$ with $b\neq 0$,
and any  integer $n\in {{{\mathbb{N}}}},$  there is an order zero  completely positive contraction map $\psi:{\rm M}_n\to A$ such that
the following conditions hold:

$(1)$ $(a^2-a\psi(1)a-\varepsilon)_+\precsim b$, and

$(2)$ for any  element $z\in {\rm M}_n$  of norm one and any $y\in F,$ we have
$\|\psi(z)y-y\psi(z)\|<\varepsilon.$

Since $A$ is non-elementary, and is simple, then by Lemma 2.3 of \cite{P3}, there exist positive  elements $b', b''\in A$  of norm one such that $b'b''=0$,
and $b'+b''\precsim b$.  Also, there exist positive  elements  ${b_1}', {b_2}'\in A$  of norm one such that ${b_1'}{b_2'}=0$, $b_1'\sim b_2'$,
and ${b_1'}+{b_2}'\precsim b'$.

 Given $\varepsilon>0$, with $f(t)=t^{1/2}\in C([0,1])$, there exists $\varepsilon'>0$ satisfying  Lemma 2.5.11 (1) of \cite{L2}. Given such $\varepsilon'>0$,  for $G=F\cup\{b'',  {b''}^{1/2}\}$,  since $A\in{\rm gTA}\Omega$,  there
exist  a  tracially $\mathcal{Z}$-absorbing ${\rm C}^*$-subalgebra $B$ of $A$
and a positive element $g\in B$ with $\|g\|\leq 1$ such that

$(1)'$ $gx\in_{\varepsilon'} B$, $xg\in_{\varepsilon'} B$ for $x\in G$,

$(2)'$ $\|gx-xg\|<\varepsilon'$ for $x\in G$,

$(3)'$ $(a^2-aga-\varepsilon)_+\precsim {b_1}'\sim{b_2}'$, and

$(4)'$ $\|gb''g\|\geq 1-\varepsilon'$.

 By  $(2)'$, with sufficiently small $\varepsilon'$, by  Lemma 2.5.11 (1) of \cite{L2}, we have

$(5)'$ $\|g^{1/2}x-xg^{1/2}\|<\varepsilon,$  for   $x\in G$, and

$(6)'$ $\|(1_{\widetilde{A}}-g)^{1/2}x-x(1_{\widetilde{A}}-g)^{1/2}\|<\varepsilon,$ for   $x\in G$, where $1_{\widetilde{A}}$ is the unit of  ${\widetilde{A}}$ and ${\widetilde{A}}$ is the unitization of $A$.

By $(1)'$, with sufficiently small $\varepsilon'$, together with $(5)'$, there exist  elements $ a_1', a_2', $ $~\cdots, ~a_k'\in B$ and a positive element $b'''\in B$ such that

$(7)'$ $\|g^{1/2}a_ig^{1/2}-a_i'\|<\varepsilon ~~~~ {\rm for}~  1\leq i\leq k$, and

$(8)'$  $\|g^{1/2}b''g^{1/2}-b'''\|<\varepsilon.$

By $(1)'$, together with $(5)'$ and $(6)'$,  for any $1\leq i\leq k$,  one has

$\|a_i-a_i'-(1_{\widetilde{A}}-g)^{1/2}a_i(1_{\widetilde{A}}-g)^{1/2}\|$

$\leq\|a_i-ga_i-(1_{\widetilde{A}}-g)a_i\|+\|ga_i-g^{1/2}a_ig^{1/2}\|$

 $+\|(1_{\widetilde{A}}-g)a_i-(1_{\widetilde{A}}-g)^{1/2}a_i(1_{\widetilde{A}}-g)^{1/2}\|$
 $+\|g^{1/2}a_ig^{1/2}-a_i'\|$

  $<\varepsilon+\varepsilon+\varepsilon=3\varepsilon $.

By $(8)'$ and  by $(1)$ of  Theorem \ref{thm:2.1}, one has

$(9)'$ ~~$(b'''-\varepsilon)_+\precsim g^{1/2}b''g^{1/2}.$

By $(4)'$, if $\varepsilon'\leq \varepsilon$, then
$$\|g^{1/2}b''g^{1/2}\|\geq\|gb''g\|\geq 1-\varepsilon.$$
Hence, by  $(8)'$,
$$1-\varepsilon\leq\|g^{1/2}b''g^{1/2}\|\leq \|b'''\|+\|g^{1/2}b''g^{1/2}-b'''\|$$
$$\leq \|b'''\|+\varepsilon.$$ Therefore,
 $$\|(b'''-\varepsilon)_+\|\geq\|b'''\|-\varepsilon\geq 1-3\varepsilon.$$
 So, if $\varepsilon<1/3$, then  $\|(b'''-\varepsilon)_+\|>0.$

 Since $B\in \Omega,$  with  $H=\{a_1', a_2', \cdots,  a_k',  g^{1/2},  ga_i'\}\subseteq B,$
  $\varepsilon>0$, $(b'''-\varepsilon)_+$ and $g$ with $(b'''-\varepsilon)_+>0$, and any integer  $n$, there  is  an order zero   completely positive contraction map $\psi_0:{\rm M}_n\to B$ such that

$(1)''$ $(g-g^{1/2}\psi_0(1)g^{1/2}-\varepsilon)_+\precsim (b'''-\varepsilon)_+,$ and

$(2)''$ for any element $z\in {\rm M}_n$ of norm one,  and any $x\in H,$ we have
 $\|\psi_0(z)x-x\psi_0(z)\|<\varepsilon.$

By Theorem \ref {thm:2.15},
applied with both the $A$ and $B$ of 2.13 equal to the present $B$,
 there exists  a completely positive contractive order zero map $\psi:{\rm M}_n\to B$ such that
 $\|\psi(z)-\psi_0(z)g\|<\varepsilon.$

 Since  $\|\psi(1)-\psi_0(1)g\|<\varepsilon$, by $(2)''$, one has

 $(3)''$ $\|\psi(1)-g^{1/2}\psi_0(1)g^{1/2}\|<2\varepsilon$.

 Therefore,

$(4)''$ $\|a^2-a\psi(1)a-(a^2-aga)-(aga-ag^{1/2}\psi_0(1)g^{1/2}a)\|<2\varepsilon$.

Then, by $(4)''$,

$(a^2-a\psi(1)a-4\varepsilon)_+$

$\precsim (a^2-aga-\varepsilon)_+\oplus(aga-ag^{1/2}\psi_0(1)g^{1/2}a-\varepsilon)_+$

 $\precsim {b_1}'\oplus (aga-ag^{1/2}\psi_0(1)g^{1/2}a-\varepsilon)_+$

 $\sim {b_1}'\oplus(g-g^{1/2}\psi_0(1)g^{1/2}-\varepsilon)_+, ~~\rm {(by~ Theorem~ 2.2)}$

  $\precsim {b_1}'\oplus(b'''-\varepsilon)_+\precsim b'\oplus g^{1/2}b''g^{1/2}\precsim b'+b''\precsim b$.

This is $(1)$ above, with $4\varepsilon$ in place of  $\varepsilon$.

For any element $z\in {\rm M}_n$ of norm one, any $a_i'\in F$,
we have

$\|\psi(z)a_i'-a_i'\psi(z)\|$

$\leq\|\psi(z)a_i'-\psi_0(z)ga_i'\|
+\|\psi_0(z)ga_i'-ga_i'\psi_0(z)\|$

 $+\|ga_i'\psi_0(z)-gg^{1/2}a_ig^{1/2}\psi_0(z)\|$

 $+\|gg^{1/2}a_ig^{1/2}\psi_0(z)-g^{1/2}a_ig^{1/2}g\psi_0(z)\|$

$+\|g^{1/2}a_ig^{1/2}g\psi_0(z)-a_i'g\psi_0(z)\|$

 $+\|a_i'g\psi_0(z)-a_i'\psi(z)\|<\varepsilon+\varepsilon+\varepsilon+\varepsilon+
 \varepsilon+\varepsilon=6\varepsilon$.

We also have

$\|\psi(z)(1_{\widetilde{A}}-g)^{1/2}a_i(1_{\widetilde{A}}-g)^{1/2}-(1_{\widetilde{A}}-g)^{1/2}a_i(1_{\widetilde{A}}-g)^{1/2}\psi(z)\|$

$\leq\|\psi(z)(1_{\widetilde{A}}-g)^{1/2}a_i((1_{\widetilde{A}}-g)^{1/2}$
$-\psi_0(z)g(1_{\widetilde{A}}-g)^{1/2}a_i(1_{\widetilde{A}}-g)^{1/2}\|$

$+\|\psi_0(z)g(1_{\widetilde{A}}-g)^{1/2}a_i(1_{\widetilde{A}}-g)^{1/2}-\psi_0(z)g(1_{\widetilde{A}}-g)a_i\|$

$+\|\psi_0(z)(1_{\widetilde{A}}-g)ga_i-(1_{\widetilde{A}}-g)\psi_0(z)ga_i\|$

 $+\|(1_{\widetilde{A}}-g)\psi_0(z)ga_i-(1_{\widetilde{A}}-g)\psi_0(z)a_ig\|$

$+\|(1_{\widetilde{A}}-g)\psi_0(z)a_ig-(1_{\widetilde{A}}-g)\psi_0(z)g^{1/2}a_ig^{1/2}\|$

$+\|(1_{\widetilde{A}}-g)\psi_0(z)g^{1/2}a_ig^{1/2}-(1_{\widetilde{A}}-g)\psi_0(z)a_i'\|$

 $+\|(1_{\widetilde{A}}-g)\psi_0(z)a_i'-(1_{\widetilde{A}}-g)a_i'\psi_0(z)\|$

 $+\|(1_{\widetilde{A}}-g)a_i'\psi_0(z)-(1_{\widetilde{A}}-g)g^{1/2}a_ig^{1/2}\psi_0(z)\|$

 $+\|(1_{\widetilde{A}}-g)g^{1/2}a_ig^{1/2}\psi_0(z)-(1_{\widetilde{A}}-g)a_ig\psi_0(z)\|$

 $+\|(1_{\widetilde{A}}-g)a_ig\psi_0(z)-(1_{\widetilde{A}}-g)a_i\psi(z)\|$

 $+\|(1_{\widetilde{A}}-g)a_i\psi(z)-(1_{\widetilde{A}}-g)^{1/2}a_i(1_{\widetilde{A}}-g)^{1/2}\psi(z)\|$

 $<\varepsilon+\varepsilon+\varepsilon+\varepsilon+\varepsilon+\varepsilon+\varepsilon+\varepsilon+\varepsilon+
 \varepsilon+\varepsilon=11\varepsilon$.

Therefore, for any $a_i\in F$ ($1\leq i\leq k$),  we have

$\|\psi(z)a_i-a_i\psi(z)\|$

$\leq\|\psi(z)a_i-\psi(z)(a_i'+(1_{\widetilde{A}}-g)^{1/2}a_i(1_{\widetilde{A}}-g)^{1/2})\|$

$+\|\psi(z)(a_i'+(1_{\widetilde{A}}-g)^{1/2}a_i(1_{\widetilde{A}}-g)^{1/2})$

$-(a_i'+(1_{\widetilde{A}}-g)^{1/2}a_i(1_{\widetilde{A}}-g)^{1/2})\psi(z)\|$

$+\|(a_i'+(1_{\widetilde{A}}-g)^{1/2}a_i(1_{\widetilde{A}}-g)^{1/2})\psi(z)-a_i\psi(z)\|$

 $\leq 3\varepsilon+3\varepsilon+\|\psi(z)a_i'-a_i'\psi(z)\|$

 $+ \|\psi(z)((1_{\widetilde{A}}-g)^{1/2}a_i((1_{\widetilde{A}}-g)^{1/2}$
$-((1_{\widetilde{A}}-g)^{1/2}a_i((1_{\widetilde{A}}-g)^{1/2}\psi(z)\|$

$\leq 6\varepsilon+6\varepsilon+11\varepsilon=23\varepsilon$.

This is $(2)$ above, with $23\varepsilon$ in place of  $\varepsilon$.
\end{proof}
The following corollary was obtained  by Elliott, Fan, and Fang in \cite{EFF}.
\begin{corollary}\label{cor:3.2}Let $\Omega$ be a class of   unital simple
${\rm C^*}$-algebras which  are tracially $\mathcal{Z}$-absorbing (see Definition \ref {def:2.5}).  Then  $A$  is  tracially $\mathcal{Z}$-absorbing  for  any non-elementary simple unital  ${\rm C^*}$-algebra $A\in{\rm  WTA}\Omega.$\end{corollary}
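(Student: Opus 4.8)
The plan is to deduce this corollary directly from Theorem \ref{thm:3.1} together with the equivalences established earlier in the paper; the entire content is bookkeeping between the unital formulation (Definition \ref{def:2.5}) and the general formulation (Definition \ref{def:2.6}) of tracial $\mathcal{Z}$-absorption, and between the classes ${\rm WTA}\Omega$ and ${\rm gTA}\Omega$. First I would observe that every member of $\Omega$ is a simple unital ${\rm C^*}$-algebra that is tracially $\mathcal{Z}$-absorbing in the sense of Definition \ref{def:2.5}. By the Remark following Definition \ref{def:2.6}, for a simple unital ${\rm C^*}$-algebra the two notions coincide, so each $B\in\Omega$ is tracially $\mathcal{Z}$-absorbing in the sense of Definition \ref{def:2.6}. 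Thus $\Omega$ satisfies the hypothesis required to invoke Theorem \ref{thm:3.1}.

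Next, since $\Omega$ is a class of unital ${\rm C^*}$-algebras and $A$ is a simple unital ${\rm C^*}$-algebra, Theorem \ref{thm:2.14} gives that $A\in{\rm WTA}\Omega$ if, and only if, $A\in{\rm gTA}\Omega$. The hypothesis $A\in{\rm WTA}\Omega$ therefore yields $A\in{\rm gTA}\Omega$. At this point all the hypotheses of Theorem \ref{thm:3.1} are in place: $A$ is a non-elementary simple ${\rm C^*}$-algebra in ${\rm gTA}\Omega$, and $\Omega$ consists of ${\rm C^*}$-algebras tracially $\mathcal{Z}$-absorbing in the sense of Definition \ref{def:2.6}. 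Applying Theorem \ref{thm:3.1} shows that $A$ is tracially $\mathcal{Z}$-absorbing in the sense of Definition \ref{def:2.6}. Finally, $A$ being simple and unital, the same Remark returns us to Definition \ref{def:2.5}, so $A$ is tracially $\mathcal{Z}$-absorbing in the originally stated unital sense, which is the assertion of the corollary.

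The only step requiring any genuine attention is keeping the two directions of the Remark straight: one uses Definition \ref{def:2.5} $\Rightarrow$ Definition \ref{def:2.6} for the algebras in $\Omega$ (to feed Theorem \ref{thm:3.1} the correct hypothesis), and then Definition \ref{def:2.6} $\Rightarrow$ Definition \ref{def:2.5} for $A$ itself (to phrase the conclusion in the original unital language). Since both implications hold for simple unital ${\rm C^*}$-algebras by the cited equivalence, there is no real obstacle here; the proof is essentially a one-line chaining of Theorem \ref{thm:2.14} and Theorem \ref{thm:3.1} once these two translations are recorded.
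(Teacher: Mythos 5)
Your proposal is correct and follows exactly the paper's own route: the paper proves this corollary simply by citing Theorem \ref{thm:3.1} together with Theorem \ref{thm:2.14}. Your additional care in translating between Definition \ref{def:2.5} and Definition \ref{def:2.6} via the Remark is just an explicit spelling-out of what the paper leaves implicit.
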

 \begin{proof}By Theorem \ref{thm:3.1}  and Theorem \ref{thm:2.14}.
 \end{proof}

\begin{theorem}\label{thm:3.3}
    	Let $\Omega$ be a class of nuclear $\rm C^{*}$-algebras which have the second  type of  tracial nuclear dimension at
    	most $n$ (in the sense of Definition \ref{def:2.8}). Then ${\rm T^{2}dim_{nuc}} A\leq n$ for any simple $\rm C^{*}$-algebra
    	$A\in {\rm gTA}\Omega$.
    \end{theorem}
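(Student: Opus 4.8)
The plan is to follow the template of the proof of Theorem~\ref{thm:3.1}, adapting the tracial $\mathcal{Z}$-absorbing argument to the nuclear-dimension setting, the crucial new ingredient being the construction of a genuine completely positive ``down'' map $\psi\colon A\to F$. First I would dispose of the elementary case: a simple elementary $C^*$-algebra is AF, hence has nuclear dimension $0\le n$, so we may assume $A$ is non-elementary. Fix a finite positive subset $\mathcal F=\{a_1,\dots,a_k\}$ with $\|a_i\|\le 1$, an $\ep>0$, and a nonzero $a\in A_+$. Using simplicity and non-elementarity (Lemma~2.3 of \cite{P3}), choose a nonzero $a_0\in A_+$ with $2\langle a_0\rangle\le\langle a\rangle$, i.e. $a_0\oplus a_0\precsim a$. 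Put $y=(\sum_{j}a_j)^{1/2}$, fix a small $\ep'>0$ (to be matched to $\ep$ through Lemma 2.5.11(1) of \cite{L2} applied to $t\mapsto t^{1/2}$), and invoke $A\in{\rm gTA}\Omega$ with the finite set $G=\mathcal F\cup\{y,\,a_0,\,a_0^{1/2}\}$, compression element $y$, and small element $a_0$. This produces $B\in\Omega$ and $g\in B$, $\|g\|\le1$, satisfying the analogues of $(1)'$--$(4)'$ of Theorem~\ref{thm:3.1}; in particular $(y^2-ygy-\ep)_+\precsim a_0$ and $\|ga_0g\|\ge\|a_0\|-\ep'$.

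As in Theorem~\ref{thm:3.1}, passing to square roots via Lemma 2.5.11(1) of \cite{L2} yields approximate commutation of $g^{1/2}$ and of $(1_{\widetilde A}-g)^{1/2}$ with $G$, together with positive elements $a_1',\dots,a_k'\in B_+$ and $b'''\in B_+$ with $\|g^{1/2}a_ig^{1/2}-a_i'\|<\ep$, $\|g^{1/2}a_0g^{1/2}-b'''\|<\ep$, and the key decomposition estimate $\|a_i-a_i'-(1_{\widetilde A}-g)^{1/2}a_i(1_{\widetilde A}-g)^{1/2}\|<3\ep$. From the analogue of $(4)'$ and Theorem~\ref{thm:2.1}(1) the element $e:=(b'''-\ep)_+\in B_+$ is nonzero (for $\ep$ small relative to $\|a_0\|$), and $e\precsim g^{1/2}a_0g^{1/2}\precsim a_0$ by Theorems~\ref{thm:2.1}(1) and~\ref{thm:2.2}.

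Next I would apply the hypothesis ${\rm T^{2}dim_{nuc}}B\le n$ to the finite positive set $\{a_1',\dots,a_k'\}\subseteq B_+$, the tolerance $\ep$, and the nonzero positive element $e$. This gives $F=F_0\oplus\cdots\oplus F_n$ and completely positive maps $\psi_B\colon B\to F$, $\varphi_B\colon F\to B$ with $\|\psi_B\|\le1$, each $\varphi_B|_{F_i}$ contractive order zero, and, for every $i$, an element $(a_i')'\in B_+$ with $(a_i')'\precsim_B e\precsim a_0$ and $\|a_i'-(a_i')'-\varphi_B\psi_B(a_i')\|<\ep$. I then set $\varphi:=\varphi_B\colon F\to B\subseteq A$, which is completely positive and inherits the required order zero decomposability. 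For the down map I use that a finite-dimensional $C^*$-algebra is an injective operator system: extend $\psi_B$ to a completely positive contraction $\overline{\psi_B}\colon A\to F$ and define $\psi:=\overline{\psi_B}\circ C_g$, where $C_g(x)=g^{1/2}xg^{1/2}$. Then $\psi\colon A\to F$ is completely positive with $\|\psi\|\le1$, and since $\|g^{1/2}a_ig^{1/2}-a_i'\|<\ep$ with $a_i'\in B$, we get $\|\psi(a_i)-\psi_B(a_i')\|<\ep$, hence $\|\varphi\psi(a_i)-\varphi_B\psi_B(a_i')\|<(n+1)\ep$. This factorization through $B$, repaired by the injective extension, is exactly what the merely approximate containment $g\mathcal F\subseteq_\ep B$ fails to supply directly, and it is the main obstacle of the argument.

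Finally I would verify condition $(1)$ of Definition~\ref{def:2.8}. For each $i$ set $x':=(a_i')'+\big((1_{\widetilde A}-g)^{1/2}a_i(1_{\widetilde A}-g)^{1/2}-\ep\big)_+\in A_+$. Chaining the decomposition estimate, the estimate from ${\rm T^{2}dim_{nuc}}B\le n$, and $\varphi\psi(a_i)\approx\varphi_B\psi_B(a_i')$ shows $\|a_i-x'-\varphi\psi(a_i)\|<C\ep$ for an absolute constant $C$. For the Cuntz bound, $(a_i')'\precsim a_0$ from the previous step, while $\big((1_{\widetilde A}-g)^{1/2}a_i(1_{\widetilde A}-g)^{1/2}-\ep\big)_+\precsim a_0$ follows by combining the inequality $(1_{\widetilde A}-g)^{1/2}a_i(1_{\widetilde A}-g)^{1/2}\le(1_{\widetilde A}-g)^{1/2}y^2(1_{\widetilde A}-g)^{1/2}$ with Theorem~\ref{thm:2.1}(4), the Cuntz equivalence $(cc^*-\ep)_+\sim(c^*c-\ep)_+$ of Theorem~\ref{thm:2.1}(5) for $c=(1_{\widetilde A}-g)^{1/2}y$, and $(y^2-ygy-\ep)_+\precsim a_0$. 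Hence $x'\precsim a_0\oplus a_0\precsim a$. Rescaling $\ep$ at the outset to absorb $C$ then yields ${\rm T^{2}dim_{nuc}}A\le n$. Besides the construction of $\psi$, the points requiring care are the single-invocation treatment of all the $a_i$ (handled by the choice $y=(\sum_j a_j)^{1/2}$ together with Theorem~\ref{thm:2.1}(4)) and the non-unital bookkeeping, carried out throughout in the unitization $\widetilde A$.
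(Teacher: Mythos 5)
Your proposal is correct, and its overall architecture coincides with the paper's: split $a_i\approx g^{1/2}a_ig^{1/2}+(1_{\widetilde A}-g)^{1/2}a_i(1_{\widetilde A}-g)^{1/2}$, approximate the first summand by $a_i'\in B$, apply ${\rm T^{2}dim_{nuc}}B\le n$ to the $a_i'$ with the small element $(b'''-\varepsilon)_+$, and dump the complementary compression into the remainder $x'$, controlling it in Cuntz comparison by condition $(3)$ of Definition \ref{def:1.2} together with Theorem \ref{thm:2.1}(4),(5). The one genuinely different step is the construction of the down map $\psi\colon A\to F$, which you correctly single out as the main obstacle. The paper uses the nuclearity hypothesis on $\Omega$: by Theorem 2.3.13 of Lin's book it produces a completely positive contraction $\psi''\colon A\to B$ with $\|\psi''(a_i')-a_i'\|<\varepsilon$, and sets $\psi=\psi'\circ\psi''\circ C_g$. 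You instead invoke injectivity of the finite-dimensional target $F$ (Arveson extension, plus a conditional expectation onto the block diagonal) to extend $\psi_B\colon B\to F$ to a completely positive contraction $\overline{\psi_B}\colon A\to F$ and set $\psi=\overline{\psi_B}\circ C_g$; the two factorizations agree on the $a_i$ up to $O(\varepsilon)$. Your route is slightly cleaner and, notably, does not use nuclearity of the algebras in $\Omega$ at all, so it proves a marginally stronger statement; the paper's route keeps everything factored through $B$ itself, which is the more traditional tracial-approximation bookkeeping. Your preliminary reduction to the non-elementary case is also a sensible addition, since both proofs need Lemma 2.3 of \cite{P3} to produce the two orthogonal comparison elements, and your choice of $y=(\sum_j a_j)^{1/2}$ as the compression element makes the final Cuntz estimate for the $(1_{\widetilde A}-g)$-corner a little more transparent than the paper's use of $a=\sum_j a_j$ directly.
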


    \begin{proof}
    	We must show that for any finite positive subset $\mathcal{F}=\{a_1, a_2,$ $\cdots, a_k\}\subset A$, for any $\varepsilon>0$,
    	and for any $b\in A_+$ with $b\neq 0$, there exist a finite-dimensional $\rm C^{*}$-algebra
    	$F=F_{0}\oplus\cdots\oplus F_{n}$ and  completely positive maps $\psi:A\rightarrow F$, $\varphi:F\rightarrow A$ such that
    	
    	$(1)$ for any  $x\in F$, there exists $\overline{x}\in A_{+}$ such that $\overline{x}\precsim b$ and $\|x-\overline{x}-\varphi\psi(x)\|<\varepsilon$,
    	
    	$(2)$ $\|\psi\|\leq1$, and
    	
    	$(3)$ $\varphi|_{F_{i}}$ is an order zero   completely positive contraction  map for $i=1, \cdots, n$.
    	
    	By Lemma 2.3 of [35], there exist positive elements $b_{1}, b_{2}\in A$ of norm one such that
    	$b_{1}b_{2}=0, b_{1}\sim b_{2}$ and $b_{1}+b_{2}\precsim_{A}b$.
    	
    	Given $\varepsilon'>0$, for $H=\mathcal{F}\cup\{b_2\}$, for $a=a_1+a_2+\cdots+a_k$ and $b_2$, since $A\in {\rm gTA}\Omega$, there exist
    	a $\rm C^{*}$-subalgebra $B$ of $A$ with $B\in\Omega$ and a positive element $g\in B, \|g\|\leq1$, such that
    	
    	$(1)'$ $gx\in_{\varepsilon'}B,  xg\in_{\varepsilon'}B$ for any $x\in H$,
    	
    	$(2)'$ $\|gx-xg\|<\varepsilon'$ for any $x\in H$,
    	
    	$(3)'$ $(a^{2}-aga-\varepsilon)_{+}\precsim b_{1}\sim b_{2}$, and
    	
    	$(4)'$ $\|gb_2g\|\geq 1-\varepsilon'$.
    	
    	By $(2)'$ and Lemma 2.5.11 of [31], with sufficiently small $\varepsilon'$, we can get
    	
    	$(5)'$ $\|g^{\frac{1}{2}}x-xg^{\frac{1}{2}}\|<\varepsilon$ for any $x\in H$, and
    	
    	$(6)'$ $\|(1_{\tilde{A}}-g)^{\frac{1}{2}}x-x(1_{\tilde{A}}-g)^{\frac{1}{2}}\|<\varepsilon$ for any $x\in H$, where $1_{\widetilde{A}}$ is the unital of  ${\widetilde{A}}$ and ${\widetilde{A}}$ is the unitization of $A$.

    	By $(1)'$, together with $(5)'$, with sufficiently small $\varepsilon'$, there exist elements
    $a'_{1}, \cdots, a'_{n}\in B$
    	and a positive element $b_{2}'\in B$ such that $\|g^{\frac{1}{2}}a_{i}g^{\frac{1}{2}}-a'_{i}\|<\varepsilon$ for
    	$1\leq i\leq k$, and $\|g^{\frac{1}{2}}b_{2}g^{\frac{1}{2}}-b_{2}'\|<\varepsilon$.
    	
    	Therefore,
    	
    	 $\|a_{i}-a'_{i}-(1_{\tilde{A}}-g)^{\frac{1}{2}}a_{i}(1_{\tilde{A}}-g)^{\frac{1}{2}}\|$
    	
    	 $\leq\|a_{i}-ga_{i}-(1_{\tilde{A}}-g)a_{i}\|+\|ga_{i}-a'_{i}\|+\|(1_{\tilde{A}}-g)a_{i}-(1_{\tilde{A}}-g)^{\frac{1}{2}}a_{i}(1_{\tilde{A}}-g)^{\frac{1}{2}}\|$
    	
    	$<\varepsilon+\varepsilon+\varepsilon=3\varepsilon$ for $1\leq i\leq n$.
    	
    	Since $\|g^{\frac{1}{2}}b_{2}g^{\frac{1}{2}}-b_{2}'\|<\varepsilon$, by (1) of Theorem \ref{thm:2.1}, we have
    	 $(b_{2}'-3\varepsilon)_{+}\precsim(g^{\frac{1}{2}}b_{2}g^{\frac{1}{2}}-2\varepsilon)_{+}$.
    	
    	By $(4)'$, $\|g^{\frac{1}{2}}b_{2}g^{\frac{1}{2}}\|\geq\|gb_{2}g\|\geq1-\varepsilon$.
    	
    	Therefore, we have $\|(b_{2}'-3\varepsilon)_{+}\|\geq\|g^{\frac{1}{2}}b_{2}g^{\frac{1}{2}}\|+4\varepsilon\geq1-5\varepsilon$,
    	 and then, with $0<\varepsilon'<\varepsilon<\frac{1}{5}$, we have  $(b_{2}'-3\varepsilon)_{+}\neq0$.
    	
    	Define a  completely positive contraction  map $\varphi'':A\rightarrow A$ by $\varphi''(a)=(1_{\tilde{A}}-g)^{\frac{1}{2}}a(1_{\tilde{A}}-g)^{\frac{1}{2}}$. Since $B$ is a nuclear
    	$\rm C^{*}$-algebra, by Theorem 2.3.13 of [30], there exist a  completely positive contraction map $\psi'':A\rightarrow B$ such that
    	$\|\psi''(g)-g\|<\varepsilon$ and $\|\psi''(a'_{i})-a'_{i}\|<\varepsilon$ for all $1\leq i\leq n$.
    	
    	Since $B\in\Omega$, so that  ${\rm T^{2}dim_{nuc}} B\leq n$, there exist a finite-dimensional $\rm C^{*}$-algebra $F=F_{0}\oplus\cdots\oplus F_{n}$ and  completely positive maps $\psi':B\rightarrow F$, $\varphi':F\rightarrow B$ such that
    	
    	$(1)''$  for any $ a_{i}'$, there exists $\overline{\overline{a_{i}'}}\in B_{+}$, such that
    	$\overline{\overline{a_{i}'}}\precsim (b_{2}'-3\varepsilon)_{+}$ and $\|a_{i}'-\overline{\overline{a_{i}'}}-\varphi'\psi'(a_{i}')\|<\varepsilon$, for any $1\leq i\leq k$,
    	
    	$(2)''$ $\|\psi'\|\leq1$, and
    	
    	$(3)''$ $\varphi'|_{F_{i}}$ is an order zero  completely positive contraction map for $i=1, \cdots, n$.
    	
    	Define $\varphi:F\rightarrow A$ by $\varphi(a)=\varphi'(a)$ and $\psi:A\rightarrow F$ by
    	$\psi(a)=\psi'\psi''(g^{\frac{1}{2}}ag^{\frac{1}{2}})$ and
    	 $\overline{a_{i}}=(\varphi''(a_{i})-\varepsilon)_{+}+\overline{\overline{a_{i}'}}$ for $1\leq i\leq n$.

    	For $a_{i}\in\mathcal{F}$, $1\leq i\leq n$, we have
    	
    	 $\overline{a_{i}}=(\varphi''(a_{i})-\varepsilon)_{+}+\overline{\overline{a_{i}'}}=
    	 ((1_{\tilde{A}}-g)^{\frac{1}{2}}a_{i}(1_{\tilde{A}}-g)^{\frac{1}{2}}-\varepsilon)_{+}+\overline{\overline{a_{i}'}}$
    	
    	 $\precsim(a_{i}^{\frac{1}{2}}(1_{\tilde{A}}-g)a_{i}^{\frac{1}{2}}-\varepsilon)_{+}\oplus(b_{2}'-3\varepsilon)_{+}$ ~~(by  $(5)$ of Theorem \ref{thm:2.1})
    	
    	$\precsim b_{1}\oplus(g^{\frac{1}{2}}b_{2}g^{\frac{1}{2}}-2\varepsilon)_{+}$
    	
    	$\precsim b_{1}+b_{2}\precsim_{A}b$.
    	
    	We also have
    	
    	 $\|a_{i}-\overline{a_{i}}-\varphi\psi(a_{i})\|=\|a_{i}-(\varphi''(a_{i})-\varepsilon)_{+}-
    	 \overline{\overline{a_{i}'}}-\varphi'\psi'\psi''(g^{\frac{1}{2}}a_{i}g^{\frac{1}{2}})\|$
    	
    	$\leq 2\varepsilon+\|a_{i}-\varphi''(a_{i})-
    	 \overline{\overline{a_{i}'}}-\varphi'\psi'\psi''(g^{\frac{1}{2}}a_{i}g^{\frac{1}{2}})\|$
    	
    	 $\leq2\varepsilon+\|a_{i}-(1_{\tilde{A}}-g)^{\frac{1}{2}}a_{i}(1_{\tilde{A}}-g)^{\frac{1}{2}}-
    	 \overline{\overline{a_{i}'}}-\varphi'\psi'\psi''(g^{\frac{1}{2}}a_{i}g^{\frac{1}{2}})\|$
    	
    	 $\leq2\varepsilon+\|a_{i}-a'_{i}-(1_{\tilde{A}}-g)^{\frac{1}{2}}a_{i}(1_{\tilde{A}}-g)^{\frac{1}{2}}\|+
    	 \|a'_{i}-\overline{\overline{a_{i}'}}-\varphi'\psi'\psi''(g^{\frac{1}{2}}a_{i}g^{\frac{1}{2}})\|$
    	
    	 $\leq2\varepsilon+\|a_{i}-a'_{i}-(1_{\tilde{A}}-g)^{\frac{1}{2}}a_{i}(1_{\tilde{A}}-g)^{\frac{1}{2}}\|+
    	\|a'_{i}-\overline{\overline{a_{i}'}}-\varphi'\psi'(a_{i}')\|$
    	
    	$+\|\varphi'\psi'(a_{i}')-
    	\varphi'\psi'\psi''(a_{i}')\|+\| \varphi'\psi'\psi''(a_{i}')-\varphi'\psi'(\psi''(g^{\frac{1}{2}}a_{i}g^{\frac{1}{2}})\|$
    	
    	 $<\varepsilon+3\varepsilon+2\varepsilon+2\varepsilon+2\varepsilon=10\varepsilon$.
    	
    	Since $\varphi'', \varphi', \psi', \psi''$ are  completely positive contraction  maps, then $\varphi$ and $\psi$ are completely positive maps.
    	
    	By  $(3)''$, $\varphi'|_{F_{i}}$ is an order zero   completely positive contraction  map, for  $i=1, \cdots, n$, and $\varphi(a)=\varphi'(a)$, and hence
    	$\varphi|_{F_{i}}$ is  an order zero   completely positive contraction map for $i=1,~\cdots,~n$.
    	
    	For any $x\in A$, one has $\|\psi(x)\|=\|\psi'(\psi''g^{\frac{1}{2}}xg^{\frac{1}{2}})\|\leq\|\psi'\|\|\psi''\|\|x\|$, then $\|\psi\|\leq\|\psi''\|\|\psi'\|\leq1$.
    	
    	Therefore, one has ${\rm T^{2}dim_{nuc}} A\leq n$.
    \end{proof}

\begin{theorem}\label{thm:3.4}
    	Let $\Omega$ be a class of $\rm C^{*}$-algebras which are $m$-almost divisible. Let $A\in{\rm gTA}\Omega$
    	be a simple  stably finite $\rm C^{*}$-algebra such that for any $n\in \mathbb{N}$ the $\rm C^{*}$-algebra ${\rm {M}}_{n}(A)$
    	belongs to the class ${\rm gTA}\Omega$. Then $A$ is weakly $(2,m)$-almost divisible.
    \end{theorem}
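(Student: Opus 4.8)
The plan is to show that $A$ is weakly $(2,m)$-almost divisible directly from the definition (Definition \ref{def:2.4}), namely that for each $a \in {\rm M}_\infty(A)_+$, each $k \in \mathbb{N}$, and each $\varepsilon > 0$, there is $b \in {\rm M}_\infty(A)_+$ with $k\langle b\rangle \leq \langle a\rangle + \langle a\rangle$ and $\langle (a-\varepsilon)_+\rangle \leq (k+1)(m+1)\langle b\rangle$. Since $m$-almost divisibility and weak $(2,m)$-almost divisibility are both preserved under passage to matrix algebras, and since by hypothesis each ${\rm M}_n(A)$ again lies in ${\rm gTA}\Omega$ (with each member of $\Omega$ $m$-almost divisible), it suffices to treat an element $a \in A_+$ itself; the general case $a \in {\rm M}_\infty(A)_+$ then follows by working inside the appropriate ${\rm M}_n(A)$. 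So I would fix $a \in A_+$, $k \in \mathbb{N}$, and $\varepsilon > 0$.

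First I would apply the generalized tracial approximation. Choosing a suitable small $\varepsilon' > 0$ and an appropriate finite set $F$ (containing $a$ and functional-calculus companions of $a$), together with a positive ``small'' element that will absorb the leftover $1-g$ part, I obtain from $A \in {\rm gTA}\Omega$ a $\rm C^*$-subalgebra $B \in \Omega$ and a positive contraction $g \in B$ satisfying the four conditions of Definition \ref{def:1.2}. The key consequences are: (i) $g$ almost commutes with $a$ and $g^{1/2}ag^{1/2}$ is close to an element $a' \in B_+$, and (ii) the ``complementary'' piece $(a^{1/2}(1_{\widetilde A}-g)a^{1/2}-\varepsilon)_+$ is Cuntz subequivalent to a copy of $a$ (via condition $(3)$ and Theorem \ref{thm:2.2}), so that the error term created by cutting down to $B$ is controlled by a single copy of $\langle a\rangle$. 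This is precisely where the second $\langle a\rangle$ in the weak divisibility estimate $k\langle b\rangle \leq \langle a\rangle + \langle a\rangle$ originates: the interior part $a'$ uses (roughly) one copy, and the exterior error uses a second.

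Next, since $B \in \Omega$ is $m$-almost divisible, I would apply Definition \ref{def:2.3} to the element $a' \in B_+$, with the same integer $k$ and a small tolerance, producing $b \in B_+ \subseteq A_+$ with $k\langle b\rangle \leq \langle a'\rangle$ in $B$ (hence in $A$) and $\langle (a'-\varepsilon'')_+\rangle \leq (k+1)(m+1)\langle b\rangle$. The remaining work is bookkeeping in the Cuntz semigroup: on the upper side, I combine $\langle (a'-\varepsilon'')_+\rangle$ with the exterior error term, using Theorem \ref{thm:2.1}(1) and Theorem \ref{thm:2.2} together with the standard inequality $\langle (x+y-\delta)_+\rangle \leq \langle (x-\delta/2)_+\rangle + \langle (y-\delta/2)_+\rangle$ to show $\langle (a-\varepsilon)_+\rangle \leq (k+1)(m+1)\langle b\rangle$ after absorbing the error into one extra copy; on the lower side, $k\langle b\rangle \leq \langle a'\rangle \leq \langle g^{1/2}ag^{1/2}\rangle + (\text{small error}) \leq \langle a\rangle + \langle a\rangle$, where the extra copy of $\langle a\rangle$ again pays for the approximation defect. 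The main obstacle I anticipate is this final semigroup estimate: I must choose the various tolerances $\varepsilon', \varepsilon''$ and the auxiliary functions in the functional calculus so that all the ``$(\cdot - \delta)_+$'' cutoffs line up, and so that the leftover approximation errors are genuinely dominated by a \emph{single} extra copy of $\langle a\rangle$ (giving the factor $2$ rather than something larger) and by the factor $(k+1)(m+1)$ on the other side. Stable finiteness is used to ensure the Cuntz comparison behaves well (in particular to rule out pathological infinite behaviour when relating $\langle a'\rangle$ back to $\langle a\rangle$).
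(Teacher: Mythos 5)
Your outline identifies the right first step (apply the ${\rm gTA}\Omega$ property to get $B\in\Omega$, $g$, an interior element $a'\in B_+$ close to $g^{1/2}ag^{1/2}$, and a remainder $a''=(1_{\widetilde A}-g)^{1/2}a(1_{\widetilde A}-g)^{1/2}$, then use $m$-almost divisibility of $B$ on $a'$), but the final step does not close, and the difficulty you dismiss as ``bookkeeping'' is in fact the heart of the theorem. The problem is on the upper side: Definition \ref{def:2.4} requires $\langle (a-\varepsilon)_+\rangle\leq (k+1)(m+1)\langle b\rangle$ \emph{exactly} --- the ``weak'' relaxation applies only to the lower inequality. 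After one application of ${\rm gTA}\Omega$ you have, roughly, $\langle (a-\varepsilon)_+\rangle\leq\langle (a'-\varepsilon'')_+\rangle+\langle(\text{remainder})\rangle\leq (k+1)(m+1)\langle b\rangle+\langle(\text{remainder})\rangle$, and the remainder must be absorbed into the \emph{slack} $(k+1)(m+1)\langle b\rangle-\langle(a'-\varepsilon'')_+\rangle$, not into an extra copy of $\langle a\rangle$ (there is no extra copy available on that side). If you take the comparison element in condition $(3)$ of Definition \ref{def:1.2} to be $a$ itself, the remainder is only controlled by $\langle a\rangle$, which is as large as the quantity you are bounding, so nothing is gained. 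Worse, the slack may be zero (when $\langle(a'-4\varepsilon)_+\rangle=(k+1)(m+1)\langle b_1\rangle$), and when it is nonzero the witnessing ``gap'' element depends on $B$, $a'$ and $b_1$, which are only produced \emph{after} the ${\rm gTA}\Omega$ approximation --- so you cannot feed it into condition $(3)$ of that same approximation. This circularity is why a single application cannot work as you describe.

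The paper resolves this with a two-stage construction: a first application of ${\rm gTA}\Omega$ produces $B$, $a'$ and (via $m$-almost divisibility) $b_1$; a four-way case analysis (whether $(a'-3\varepsilon)_+$ and $(a'-4\varepsilon)_+$ are Cuntz equivalent to projections, using Theorem \ref{thm:2.1}(3) in the purely positive cases, and a doubling trick $b_1\oplus b_1$ when $\langle(a'-4\varepsilon)_+\rangle=(k+1)(m+1)\langle b_1\rangle$) identifies a nonzero element $c$ with $\langle(a'-j\varepsilon)_+\rangle+\langle c\rangle\leq (k+1)(m+1)\langle b_1\rangle$ (or its variant); then a \emph{second} application of ${\rm gTA}\Omega$ is made to the remainder $a''$, with $c$ as the comparison element in condition $(3)$, yielding a second subalgebra $D$, an element $a'''\in D$, and a second divisor $b_2$. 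The final witness is $b=b_1\oplus b_2$, and the two copies of $\langle a\rangle$ in $k\langle b\rangle\leq\langle a\rangle+\langle a\rangle$ come from $k\langle b_1\rangle\leq\langle a\rangle$ and $k\langle b_2\rangle\leq\langle a\rangle$ separately --- not from ``one copy for the interior and one for the error.'' Also note that stable finiteness is used specifically to invoke the dichotomy that a positive element is either purely positive or Cuntz equivalent to a projection, which drives the case analysis, not merely to make comparison ``behave well.'' Your proposal is missing the second approximation, the case analysis, and the mechanism for producing the gap element, so as written it does not prove the upper inequality.
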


    \begin{proof} For any given $a\in A_{+},$ any $\varepsilon>0$, and any integer
    	$k\in {\mathbb{N}}$,  (we have replaced ${\rm {M}}_{n}(A)$ containing  a given element $a$ initially by $A$), we must show that there is $b\in {\rm M}_{\infty}(A)_{+}$ such that $k\langle b\rangle\leq\langle a\rangle+\langle a\rangle$
    	and $\langle(a-\varepsilon)_{+}\rangle\leq(k+1)(m+1)\langle b\rangle$.
    	
    	For any $\delta_{1}>0$, with $G=\{a, a^{1/2}\}$, since $A\in{\rm gTA}\Omega$, there exist a $\rm C^{*}$-subalgebra $B$ of $A$ with  $B\in\Omega$, and an element
    	$g\in B$ with $\|g\|\leq1$,  such that
    	
        $(1)$ $ga\in_{\delta_{1}} B, ag\in_{\delta_{1}} B$,  $ga^{1/2}\in_{\delta_{1}} B, a^{1/2} g\in_{\delta_{1}} B$, and
    	
    	$(2)$ $\|ag-ga\|<\delta_{1}$,  $\|a^{1/2}g-ga^{1/2}\|<\delta_{1}$.
    	
    	By $(2)$, with sufficiently small $\delta_{1}$, by Lemma 2.5.11(1) of [31], we have
    	
    	$(3)$ $\|g^{\frac{1}{2}}a-ag^{\frac{1}{2}}\|<\varepsilon/3$,
    	
    	$(4)$ $\|(1_{\widetilde{A}}-g)^{\frac{1}{2}}a-a(1_{\widetilde{A}}-g)^{\frac{1}{2}}\|<\varepsilon/3$, and

       $(5)$ $\|(1_{\widetilde{A}}-g)^{\frac{1}{4}}a^{\frac{1}{2}}-a^{\frac{1}{2}}(1_{\widetilde{A}}-g)^{\frac{1}{4}}\|<\varepsilon/3$,
       where $1_{\widetilde{A}}$ is the unital of  ${\widetilde{A}}$ and ${\widetilde{A}}$ is the unitization of $A$.

    	By $(1)$ and $(2)$, with sufficiently small $\delta_{1}$, there exists a positive $a'\in B$ such that
    	
    	$(6)$ $\|g^{\frac{1}{2}}ag^{\frac{1}{2}}-a'\|<\varepsilon/3$.
    	
    	By $(3)$, $(4)$,  and $(5)$
    	
    	$(7)$ $\|a-a'-(1_{\widetilde{A}}-g)^{\frac{1}{2}}a(1_{\widetilde{A}}-g)^{\frac{1}{2}}\|$

    	 $\leq\|a-ga-(1_{\widetilde{A}}-g)a\|+\|ga-a'\|$

    $+\|(1_{\widetilde{A}}-g)a-(1_{\widetilde{A}}-g)^{\frac{1}{2}}a(1_{\widetilde{A}}-g)^{\frac{1}{2}}\|$

    	$<\varepsilon/3+\varepsilon/3+\varepsilon/3=\varepsilon$ .
    	
    	Since $B$ is $m$-almost divisible, and $(a'-3\varepsilon)_{+}\in B$, there exists $b_{1}\in B$ such that
    	$k\langle b_{1}\rangle\leq\langle(a'-3\varepsilon)_{+}\rangle$ and $\langle(a'-4\varepsilon)_{+}\rangle\leq(k+1)(m+1)\langle b_{1}\rangle$.
    	
    	Since $B$ is $m$-almost divisible, and $(a'-2\varepsilon)_{+}\in B$, there exists $b'\in B$ such that
    	$k\langle b'\rangle\leq\langle(a'-2\varepsilon)_{+}\rangle$ and $\langle(a'-3\varepsilon)_{+}\rangle\leq(k+1)(m+1)\langle b'\rangle$.
    	
    	Write $a''=(1_{\widetilde{A}}-g)^{\frac{1}{2}}a(1_{\widetilde{A}}-g)^{\frac{1}{2}}$.
    	
    	 We divide the proof into two cases.
    	
    	\textbf{{Case (1)}} We assume that $(a'-3\varepsilon)_{+}$ is Cuntz eqivalent to a projection.
    	
    	\textbf{{ (1.1)}} We assume that $(a'-4\varepsilon)_{+}$ is Cuntz eqivalent to a projection.
    	
    	\textbf{{ (1.1.1)}} If $\langle(a'-4\varepsilon)_{+}\rangle$ is not equal to
    	$(k+1)(m+1)\langle b_{1}\rangle$, there exists a non-zero element $c\in A_{+}$ such that
    	$\langle(a'-4\varepsilon)_{+}\rangle+\langle c\rangle\leq(k+1)(m+1)\langle b_{1}\rangle$.
    	
    	For any $0<\delta_{2}<\varepsilon$,  with $F_1=\{a'', (1_{\widetilde{A}}-g)^{\frac{1}{4}}a^{\frac{1}{2}},  a^{\frac{1}{2}}(1_{\widetilde{A}}-g)^{\frac{1}{4}},
    	(1_{\widetilde{A}}-g)^{\frac{1}{2}}a^{\frac{1}{2}},
    a^{\frac{1}{2}}(1_{\widetilde{A}}-g)^{\frac{1}{2}},
    	 (1_{\widetilde{A}}-g)^{\frac{1}{4}}a^{\frac{1}{2}}(1_{\widetilde{A}}-g)^{\frac{1}{4}}a^{\frac{1}{2}},
    	a^{\frac{1}{2}}(1_{\widetilde{A}}-g)^{\frac{1}{2}}a^{\frac{1}{2}}\}$, with $c$ and $(1_{\widetilde{A}}-g)^{\frac{1}{4}}a^{\frac{1}{2}}(1_{\widetilde{A}}-g)^{\frac{1}{4}}$,
    	since $A\in{\rm gTA}\Omega$, there exist a $\rm C^{*}$-subalgebra $D$ of $A$ with
    $D\in\Omega$, and a positive  element $g_{1}\in D$ with $\|g_1\|\leq 1$, such that
    	
    	$(1)'$ $g_{1}x\in_{\delta_{2}} D, xg_{1}\in_{\delta_{2}} D$, for any $x\in F_1$,
    	
    	$(2)'$ $\|xg-gx\|<\delta_{2}$, for any $x\in F_1$, and
    	
    	$(3)'$ $(((1_{\widetilde{A}}-g)^{\frac{1}{4}}a^{\frac{1}{2}}(1_{\widetilde{A}}-g)^{\frac{1}{4}})^{2}-
    (1_{\widetilde{A}}-g)^{\frac{1}{4}}a^{\frac{1}{2}}(1_{\widetilde{A}}-g)^{\frac{1}{4}}g_{1}
    (1_{\widetilde{A}}-g)^{\frac{1}{4}}a^{\frac{1}{2}}(1_{\widetilde{A}}-g)^{\frac{1}{4}}-\varepsilon)_{+}\precsim c$.
    	
    	By $(1)'$ and $(2)'$, with sufficiently small $\delta_{2}$, as above, via the analogues of $(4)$, $(5)$
    	for $a'', g_{1}$, there exists a positive element $a'''\in D$ such that:
    	
    	 $(4)'$ $\|(1_{\widetilde{A}}-g)^{\frac{1}{4}}a^{\frac{1}{2}}(1_{\widetilde{A}}-g)^{\frac{1}{4}}g_{1}^{\frac{1}{2}}-
    	 g_{1}^{\frac{1}{2}}(1_{\widetilde{A}}-g)^{\frac{1}{4}}a^{\frac{1}{2}}(1_{\widetilde{A}}-g)^{\frac{1}{4}}\|<\varepsilon$,
    	
    	 $(5)'$ $\|(1_{\widetilde{A}}-g)^{\frac{1}{4}}a^{\frac{1}{2}}(1_{\widetilde{A}}-g)^{\frac{1}{4}}(1_{\widetilde{A}}-g_{1})^{\frac{1}{2}}-
    	 (1_{\widetilde{A}}-g_{1})^{\frac{1}{2}}(1_{\widetilde{A}}-g)^{\frac{1}{4}}a^{\frac{1}{2}}(1_{\widetilde{A}}-g)^{\frac{1}{4}}\|<\varepsilon$,
    	
    	 $(6)'$ $\|g_{1}^{\frac{1}{2}}a''g_{1}^{\frac{1}{2}}-a'''\|<\varepsilon/3$, and

    	  $(7)'$ $\|a''-a'''-(1_{\widetilde{A}}-g_{1})^{\frac{1}{2}}a''(1_{\widetilde{A}}-g_{1})^
    {\frac{1}{2}}\|<\varepsilon$.
    	
    	Since $D$ is $m$-almost divisible, and $(a'''-3\varepsilon)_{+}\in D$, there exists $b_{2}\in D$ such that
    	$k\langle b_{2}\rangle\leq\langle(a'''-3\varepsilon)_{+}\rangle$ and $\langle(a'''-4\varepsilon)_{+}\rangle\leq(k+1)(m+1)\langle b_{2}\rangle$.
    	
    	Since $a'\leq a'+a''$,  by Theorem \ref{thm:2.1} (4), one has $\langle(a'-\varepsilon)_{+}\rangle\leq\langle(a'+a''-\varepsilon)_{+}\rangle$,  Also, by $(7)$,
    	one has  $\langle(a'+a''-\varepsilon)_{+}\rangle\leq\langle a\rangle$,  and therefore $\langle(a'-\varepsilon)_{+}\rangle\leq\langle a\rangle$,  and similarly $\langle(a'''-3\varepsilon)_{+}\rangle\leq\langle a\rangle$.
    	
    	 Therefore,  we have

    	$k\langle b_{1}\oplus b_{2}\rangle=k\langle b_{1}\rangle+k\langle b_{2}\rangle$ \\
        $\leq\langle(a'-3\varepsilon)_{+}\rangle+\langle(a'''-3\varepsilon)_{+}\rangle$ \\
        $\leq\langle(a'-2\varepsilon)_{+}\rangle+\langle(a'''-3\varepsilon)_{+}\rangle$ \\
        $\leq\langle a\rangle+\langle a\rangle$.

        By $(7)$ and $(7)'$, one has

        $(8)'$ $\|a-a'-a'''-(1_{\widetilde{A}}-g_{1})^{\frac{1}{2}}a''(1_{\widetilde{A}}-g_{1})^{\frac{1}{2}}\|<2\varepsilon$.

        Then, by $(4)'$ and $(8)'$, one has

                $(9)'$ $\|a-a'-a'''-(1_{\widetilde{A}}-g)^{\frac{1}{4}}a^{\frac{1}{2}}(1_{\widetilde{A}}-g)^{\frac{1}{4}}(1_{\widetilde{A}}-g_{1})
        (1_{\widetilde{A}}-g)^{\frac{1}{4}}a^{\frac{1}{2}}(1_{\widetilde{A}}-g)^{\frac{1}{4}}\|<8\varepsilon$.

         Then,  by $(9)'$ and $(3)'$,  we can get

         $\langle(a-20\varepsilon)_{+}\rangle$

         $\leq\langle(a'-4\varepsilon)_{+}\rangle+\langle(a'''-4\varepsilon)_{+}\rangle$

         $+\langle(((1_{\widetilde{A}}-g)^{\frac{1}{4}}a^{\frac{1}{2}}(1_{\widetilde{A}}-g)^{\frac{1}{4}})^{2}$

         $-(1_{\widetilde{A}}-g)^{\frac{1}{4}}a^{\frac{1}{2}}(1_{\widetilde{A}}-g)^{\frac{1}{4}}g_{1}
         (1_{\widetilde{A}}-g)^{\frac{1}{4}}a^{\frac{1}{2}}(1_{\widetilde{A}}-g)^{\frac{1}{4}}-\varepsilon)_{+}\rangle$

         $\leq\langle(a'-4\varepsilon)_{+}\rangle+\langle(a'''-4\varepsilon)_{+}\rangle+\langle c\rangle$

         $\leq(k+1)(m+1)\langle b_{1}\rangle+(k+1)(m+1)\langle b_{2}\rangle=(k+1)(m+1)\langle b_{1}\oplus b_{2}\rangle$.

         These are the desired inequalities, with $b_{1}\oplus b_{2}$ in place of $b$ and $20\varepsilon$ in place of $\varepsilon$.

        \textbf{ { (1.1.2)}} If $\langle(a'-4\varepsilon)_{+}\rangle\sim(k+1)(m+1)\langle b_{1}\rangle$, so that
         $(k+1)(m+1)\langle b_{1}\rangle\leq\langle(a'-3\varepsilon)_{+}\rangle$, then as $m+1\geq2$, we have
         $k\langle b_{1}\oplus b_{1}\rangle\leq\langle(a'-3\varepsilon)_{+}\rangle$ and
         $\langle(a'-4\varepsilon)_{+}\rangle+\langle b_{1}\rangle\leq(k+1)(m+1)\langle b_{1}\oplus b_{1}\rangle$.

         As in the part \textbf{ (1.1.1)}, as $A\in{\rm gTA}\Omega$, with $b_{1}$ in place of $c$ in the part \textbf{ (1.1.1)},  there exists a $\rm C^{*}$-subalgebra $D_{1}$ of $A$ with $D_{1}\in\Omega$,
          and a positive element $g_{2}\in D_{1}$ with $\|g_{2}\|\leq 1$,
         there exists a positive element $a^{(4)}\in D_{1}$ such that

    	 $(10)'$ $\|a''-a^{(4)}-(1_{\widetilde{A}}-g_{2})^{\frac{1}{2}}a''(1_{\widetilde{A}}-g_{2})^{\frac{1}{2}}\|<\varepsilon$,

    	$(11)'$$\langle(a^{(4)}-3\varepsilon)_{+}\rangle\leq\langle a \rangle$,  and

    $(12)'$$\langle(((1_{\widetilde{A}}-g)^{\frac{1}{4}}a^{\frac{1}{2}}(1_{\widetilde{A}}-g)^{\frac{1}{4}})^{2}-
    (1_{\widetilde{A}}-g)^{\frac{1}{4}}a^{\frac{1}{2}}(1_{\widetilde{A}}-g)^{\frac{1}{4}}g_{2}(1_{\widetilde{A}}-g)^{\frac{1}{4}}a^{\frac{1}{2}}(1_{\widetilde{A}}-g)^{\frac{1}{4}}-\varepsilon)_{+}\rangle\precsim \langle b_{1}\rangle$.
    	
    	Since $D_{1}$ is $m$-almost divisible and $(a^{(4)}-3\varepsilon)_{+}\in D_{1}$, there exists $b_{3}\in (D_{1})_{+}$ such that
    	$k\langle b_{3}\rangle\leq\langle(a^{(4)}-3\varepsilon)_{+}\rangle$ and $\langle(a^{(4)}-4\varepsilon)_{+}\rangle\leq(k+1)(m+1)\langle b_{3}\rangle$.
    	
    	Therefore, we have
    	
    	$k\langle b_{1}\oplus b_{1}\oplus b_{3}\rangle=k\langle b_{1}\oplus b_{1}\rangle+k\langle b_{3}\rangle$

    	 $\leq\langle(a'-3\varepsilon)_{+}\rangle+\langle(a^{(4)}-3\varepsilon)_{+}\rangle$

    	$\leq\langle a\rangle+\langle a\rangle$.

    By $(7)'$ and $(10)'$, one has

    $(13)'$ $\|a-a'-a^{(4)}-(1_{\widetilde{A}}-g_{2})^{\frac{1}{2}}a''(1_{\widetilde{A}}-g_{2})^{\frac{1}{2}}\|<2\varepsilon$,

    Then,  By $(7)'$ and $(10)'$,
    	
    	 $(14)'$$\|a-a'-a^{(4)}-(1_{\widetilde{A}}-g)^{\frac{1}{4}}a^{\frac{1}{2}}(1_{\widetilde{A}}-g)^{\frac{1}{4}}(1_{\widetilde{A}}-g_{2})
    	 (1_{\widetilde{A}}-g)^{\frac{1}{4}}a^{\frac{1}{2}}(1_{\widetilde{A}}-g)^{\frac{1}{4}}\|<8\varepsilon$.
    	
    	Then, By $(7)'$ and $(10)'$, we can get
    	
    	$\langle(a-20\varepsilon)_{+}\rangle$

    	 $\leq\langle(a'-4\varepsilon)_{+}\rangle$

    $+\langle(a^{(4)}-4\varepsilon)_{+}\rangle+    	 \langle(((1_{\widetilde{A}}-g)^{\frac{1}{4}}a^{\frac{1}{2}}(1_{\widetilde{A}}-g)^{\frac{1}{4}})^{2}$

    $-(1_{\widetilde{A}}-g)^{\frac{1}{4}}a^{\frac{1}{2}}
    (1_{\widetilde{A}}-g)^{\frac{1}{4}}g_{2}(1_{\widetilde{A}}-g)^{\frac{1}{4}}a^{\frac{1}{2}}(1_{\widetilde{A}}-g)^{\frac{1}{4}}
    )-\varepsilon)_{+}\rangle$

    	 $\leq\langle(a'-4\varepsilon)_{+}\rangle+\langle(a^{(4)}-4\varepsilon)_{+}\rangle+\langle b_{1}\rangle$

    	$\leq(k+1)(m+1)\langle b_{1}\oplus b_{1}\rangle+(k+1)(m+1)\langle b_{3}\rangle=(k+1)(m+1)\langle b_{1}\oplus b_{1}\oplus b_{3}\rangle$.
    	
    	These are the desired inequalities, with $b_{1}\oplus b_{1}\oplus b_{3}$ in place of $b$ and $20\varepsilon$ in place of $\varepsilon$.
    	
    	\textbf{{(1.2)}} We assume that $(a'-4\varepsilon)_{+}$ is not Cuntz equivalent to a projection. By Theorem \ref{thm:2.1} (3), there is     	 a non zero positive element $d$ such that $\langle(a'-5\varepsilon)_{+}\rangle+\langle d\rangle\leq\langle(a'-4\varepsilon)_{+}\rangle$.
    	
    	 As in the part \textbf{(1.1.1)}, as $A\in{\rm gTA}\Omega$, with $d$ in place of $c$ in the part\textbf{ (1.1.1)}, there exists a $\rm C^{*}$-subalgebra $D_{2}$ of $A$ with $D_{2}\in\Omega$, and a positive  element $g_{3}\in D_{2}$ with $\|g_{3}\|\leq 1$, and
    	 there exists a positive element $a^{(5)}\in D_{2}$ such that
    	
    	 $(1)''$  $\|a''-a^{(5)}-(1_{\widetilde{A}}-g_{3})^{\frac{1}{2}}a''(1_{\widetilde{A}}-g_{3})^{\frac{1}{2}}\|<\varepsilon$,

    	$(2)''$  $\langle(a^{(5)}-3\varepsilon)_{+}\rangle\leq\langle a \rangle$,
     and

     $(3 )''$ $\langle((((1_{\widetilde{A}}-g)^{\frac{1}{4}}a^{\frac{1}{2}}(1_{\widetilde{A}}-g)^{\frac{1}{4}})^{2}-(1_{\widetilde{A}}-g)^{\frac{1}{4}}a^{\frac{1}{2}}(1_{\widetilde{A}}-g)^{\frac{1}{4}}g_{3}(1_{\widetilde{A}}-g)^{\frac{1}{4}}a^{\frac{1}{2}}(1_{\widetilde{A}}-g)^{\frac{1}{4}})-\varepsilon)_{+}\rangle\precsim \langle d\rangle$.
    	
    	Since $D_{2}$ is $m$-almost divisible, and $(a^{(5)}-3\varepsilon)_{+}\in D_{2}$, there exists $b_{4}\in (D_{2})_{+}$ such that
    	$k\langle b_{4}\rangle\leq\langle(a^{(5)}-3\varepsilon)_{+}\rangle$ and $\langle(a^{(5)}-4\varepsilon)_{+}\rangle\leq(k+1)(m+1)\langle b_{4}\rangle$.
    	
    	Therefore,
    	
    	  $k\langle b_{1}\oplus b_{4}\rangle=k\langle b_{1}\rangle+k\langle b_{4}\rangle$

        	 $\leq\langle(a'-3\varepsilon)_{+}\rangle+\langle(a^{(5)}-3\varepsilon)_{+}\rangle$

    	$\leq\langle a\rangle+\langle a\rangle$.
    	
    	 By $(7)$ and $(1)''$, one has

     $(4 )''$ $\|a-a'-a^{(5)}-(1_{\widetilde{A}}-g_{3})^{\frac{1}{2}}a''(1_{\widetilde{A}}-g_{3})^{\frac{1}{2}}\|<2\varepsilon$,
    	
    By $(3)''$ and $(4)''$, one has
    	
    	 $(5)''$ $\|a-a'-a^{(5)}-(1_{\widetilde{A}}-g)^{\frac{1}{4}}a^{\frac{1}{2}}(1_{\widetilde{A}}-g)^{\frac{1}{4}}(1_{\widetilde{A}}-g_{3})
    	 (1_{\widetilde{A}}-g)^{\frac{1}{4}}a^{\frac{1}{2}}(1_{\widetilde{A}}-g)^{\frac{1}{4}}\|<8\varepsilon$.
    	
    	Then, by $(7)$ and  $(5)''$,  we can get
    	
    	$\langle(a-20\varepsilon)_{+}\rangle$

    	 $\leq\langle(a'-5\varepsilon)_{+}\rangle+\langle(a^{(5)}-4\varepsilon)_{+}\rangle$

    $+\langle(((1_{\widetilde{A}}-g)^{\frac{1}{4}}a^{\frac{1}{2}}(1_{\widetilde{A}}-g)^{\frac{1}{4}})^{2}$

    $-(1_{\widetilde{A}}-g)^{\frac{1}{4}}a^{\frac{1}{2}}(1_{\widetilde{A}}-g)^
    {\frac{1}{4}}g_{3}(1_{\widetilde{A}}-g)^{\frac{1}{4}}a^{\frac{1}{2}}(1_{\widetilde{A}}-g)^{\frac{1}{4}}-\varepsilon)_{+}\rangle$

    	 $\leq\langle(a'-5\varepsilon)_{+}\rangle+\langle(a^{(5)}-4\varepsilon)_{+}\rangle+\langle d\rangle$

    	$\leq(k+1)(m+1)\langle b_{1}\rangle+(k+1)(m+1)\langle b_{4}\rangle=(k+1)(m+1)\langle b_{1}\oplus b_{4}\rangle$.
    	
    	These are the desired inequalities, with $b_{1}\oplus b_{4}$ in place of $b$, and $20\varepsilon$ in place of $\varepsilon$.
    	
    	\textbf{{ Case (2)}} We assume that $(a'-3\varepsilon)_{+}$ is not Cuntz equivalent to a projection.
    By Theorem \ref{thm:2.1} (3),  there is a non-zero positive element $r$ such that $\langle(a'-4\varepsilon)_{+}\rangle+\langle r\rangle\leq\langle(a'-3\varepsilon)_{+}\rangle$.
    	
    	 As in the part \textbf{(1.1.1)}, as $A\in{\rm gTA}\Omega$, with $r$ in place of $c$ in the part\textbf{ (1.1.1)},
    there exists a $\rm C^{*}$-subalgebra $D_{3}$ of $A$ with $D_{3}\in\Omega$,  and an element $g_{4}\in D_{3}$ with $\| g_{4}\|\leq 1$, and
    	 there exists a positive element $a^{(6)}\in D_{3}$,  such that
    	
    	 $(1 )'''$ $\|a''-a^{(6)}-(1_{\widetilde{A}}-g_{4})^{\frac{1}{2}}a''(1_{\widetilde{A}}-g_{4})^{\frac{1}{2}}\|<\varepsilon$,

    	$(2 )'''$  $\langle(a^{(6)}-3\varepsilon)_{+}\rangle\leq\langle a \rangle$, and

    $(3 )'''$ $\langle((((1_{\widetilde{A}}-g)^{\frac{1}{4}}a^{\frac{1}{2}}(1_{\widetilde{A}}-g)^{\frac{1}{4}})^{2}-(1_{\widetilde{A}}-g)^{\frac{1}{4}}a^{\frac{1}{2}}(1_{\widetilde{A}}-g)^{\frac{1}{4}}g_{4}(1_{\widetilde{A}}-g)^{\frac{1}{4}}a^{\frac{1}{2}}(1_{\widetilde{A}}-g)^{\frac{1}{4}})-\varepsilon)_{+}\rangle\precsim \langle r\rangle$.
    	
    	 Since $D_{3}$ is $m$-almost divisible, and $(a^{(6)}-3\varepsilon)_{+}\in D_{3}$, there exists $b_{5}\in (D_{3})_{+}$ such that
    	 $k\langle b_{5}\rangle\leq\langle(a^{(6)}-3\varepsilon)_{+}\rangle$ and $\langle(a^{(6)}-4\varepsilon)_{+}\rangle\leq(k+1)(m+1)\langle b_{5}\rangle$.
    	
    	 Therefore,
    	
    	 $k\langle b'\oplus b_{5}\rangle=k\langle b'\rangle+k\langle b_{5}\rangle$

    	 $\leq\langle(a'-3\varepsilon)_{+}\rangle+\langle(a^{(6)}-3\varepsilon)_{+}\rangle$

    	 $\leq\langle a\rangle+\langle a\rangle$.
    	
    	  By $(7)$ and $(1 )'''$, one has

      $(4 )'''$  $\|a-a'-a^{(6)}-(1_{\widetilde{A}}-g_{4})^{\frac{1}{2}}a''(1_{\widetilde{A}}-g_{4})^{\frac{1}{2}}\|<2\varepsilon$.
    	
    By $(3)'''$ and $(4)'''$, one has
    	
    	 $(5 )'''$ $\|a-a'-a^{(6)}
    -(1_{\widetilde{A}}-g)^{\frac{1}{4}}a^{\frac{1}{2}}(1_{\widetilde{A}}-g)^{\frac{1}{4}}(1_{\widetilde{A}}-g_{4})
    	 (1_{\widetilde{A}}-g)^{\frac{1}{4}}a^{\frac{1}{2}}(1_{\widetilde{A}}-g)^{\frac{1}{4}}\|<8\varepsilon$.
    	
    	 Then, by $(7)$ and $(5 )'''$,  we can get
    	
    	 $\langle(a-20\varepsilon)_{+}\rangle$

    	 $\leq\langle(a'-4\varepsilon)_{+}\rangle+\langle(a^{(6)}-4\varepsilon)_{+}\rangle$

    $+\langle(((1_{\widetilde{A}}-g)^{\frac{1}{4}}a^{\frac{1}{2}}(1_{\widetilde{A}}-g)^{\frac{1}{4}})^{2}$

    $-(1_{\widetilde{A}}-g)^{\frac{1}{4}}a^{\frac{1}{2}}(1_{\widetilde{A}}-g)^{\frac{1}{4}}
    g_{4}(1_{\widetilde{A}}-g)^{\frac{1}{4}}a^{\frac{1}{2}}(1_{\widetilde{A}}-g)^{\frac{1}{4}}-\varepsilon)_{+}\rangle$

    	 $\leq\langle(a'-4\varepsilon)_{+}\rangle+\langle(a^{(6)}-4\varepsilon)_{+}\rangle+\langle r\rangle$

    	 $\leq(k+1)(m+1)\langle b'\rangle+(k+1)(m+1)\langle b_{5}\rangle=(k+1)(m+1)\langle b'\oplus b_{5}\rangle$.
    	
    	 These are the desired inequalities, with $b'\oplus b_{5}$ in place of $b$ and $20\varepsilon$ in place of $\varepsilon$.
          	
    \end{proof}
The proof of the following theorem is the same as Theorem 3.1 in \cite{EFF}.
 \begin{theorem}\label{thm:3.5}
    	Let $\Omega$ be a class of $\rm C^{*}$-algebras which have the property $\rm {SP}$. Then $A$ has the property $\rm {SP}$ for any simple unital $\rm C^{*}$-algebra $A\in{\rm gTA}\Omega$.
    \end{theorem}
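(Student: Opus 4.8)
The plan is to verify property $\mathrm{SP}$ through the standard reformulation for simple $C^*$-algebras: it is enough to show that for every non-zero positive element $a\in A$, normalized so that $\|a\|=1$, there is a non-zero projection $p\in A$ with $p\precsim a$. Indeed, by Theorem \ref{thm:2.1}(2) (equivalently, by the usual spectral-projection argument) a projection that is Cuntz subequivalent to $a$ is Murray--von Neumann equivalent to a projection lying in $\overline{aAa}$, and the latter is then the required non-zero projection in the given hereditary $C^*$-subalgebra.

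First I would fix $\varepsilon\in(0,1/3)$ and apply the characterization of $\mathrm{gTA}\Omega$ in Theorem \ref{thm:2.13} to the finite set $F=\{a,a^{1/2}\}$ and the element $a$, obtaining a $C^*$-subalgebra $B\in\Omega$ --- so that $B$ has property $\mathrm{SP}$ --- together with a positive element $g\in B$, $\|g\|\le 1$, satisfying $\|xg-gx\|<\varepsilon$ and $gx,xg\in_{\varepsilon}B$ for $x\in F$, and $\|gag\|\ge\|a\|-\varepsilon=1-\varepsilon$. Exactly as in the proof of Theorem \ref{thm:3.1}, passing to square roots via Lemma 2.5.11(1) of \cite{L2} (with the $\mathrm{gTA}$ tolerance taken small enough) gives $\|g^{1/2}x-xg^{1/2}\|<\varepsilon$ for $x\in F$; combining this with $ga\in_{\varepsilon}B$ and the estimate $\|g^{1/2}ag^{1/2}-ga\|\le\|ag^{1/2}-g^{1/2}a\|<\varepsilon$, I produce a positive element $a'\in B$ with $\|g^{1/2}ag^{1/2}-a'\|<\varepsilon$.

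Next I would check that $a'$ is non-zero, in fact of norm close to one. Since $\|g^{1/2}\|\le 1$, one has $\|g^{1/2}ag^{1/2}\|\ge\|g^{1/2}(g^{1/2}ag^{1/2})g^{1/2}\|=\|gag\|\ge 1-\varepsilon$, so $\|a'\|\ge 1-2\varepsilon>\varepsilon$. Hence $(a'-\varepsilon)_+$ is a non-zero positive element of $B$, and since $B$ has property $\mathrm{SP}$ the hereditary $C^*$-subalgebra $\overline{(a'-\varepsilon)_+B(a'-\varepsilon)_+}$ contains a non-zero projection $p$; in particular $p\in B\subseteq A$ and $p\precsim(a'-\varepsilon)_+$. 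Finally I would assemble the Cuntz comparisons. By Theorem \ref{thm:2.1}(1), $\|a'-g^{1/2}ag^{1/2}\|<\varepsilon$ yields $(a'-\varepsilon)_+\precsim g^{1/2}ag^{1/2}$, and writing $c=g^{1/2}a^{1/2}$ we have $g^{1/2}ag^{1/2}=cc^*\sim c^*c=a^{1/2}ga^{1/2}\le a$, so $g^{1/2}ag^{1/2}\precsim a$. Chaining these, $p\precsim(a'-\varepsilon)_+\precsim g^{1/2}ag^{1/2}\precsim a$, i.e.\ $p$ is a non-zero projection with $p\precsim a$, which is what the first paragraph requires.

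The hard part will be the two transitions that carry the projection between $B$ and $A$: extracting a genuine positive element $a'\in B$ close to $g^{1/2}ag^{1/2}$ out of the near-containment and near-commutation conditions of Theorem \ref{thm:2.13} (handled, as in Theorem \ref{thm:3.1}, by choosing the tolerance small and invoking Lemma 2.5.11(1) of \cite{L2}), and the transfer of $p$ back into $\overline{aAa}$ via $p\precsim a$, which rests on the standard fact recorded in Theorem \ref{thm:2.1}(2) that a projection Cuntz-below $a$ is equivalent to a projection in $\overline{aAa}$. Note that neither step uses condition $(3)$ of the $\mathrm{gTA}$ definition; only the ``large corner'' condition $\|gag\|\ge\|a\|-\varepsilon$ together with the approximate containment in a subalgebra with property $\mathrm{SP}$ is needed.
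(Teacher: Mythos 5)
Your proposal is correct and follows essentially the same route as the paper: apply the ${\rm gTA}\Omega$ approximation to $F=\{a,a^{1/2}\}$, pull $g^{1/2}ag^{1/2}$ into the subalgebra $B$ as a positive element $a'$ of norm close to $1$, use property $\rm SP$ of $B$ to extract a non-zero projection $p$ in $\overline{(a'-\varepsilon)_+B(a'-\varepsilon)_+}$, and then transfer $p$ back into $\mathrm{Her}(a)$. The only real divergence is the last step: the paper conjugates $p$ by $f_{\varepsilon}(a^{1/2}g^{2}a^{1/2})\in\mathrm{Her}(a)$ and perturbs the resulting almost-projection to an exact projection in $\mathrm{Her}(a)$, whereas you run the Cuntz chain $p\precsim(a'-\varepsilon)_+\precsim g^{1/2}ag^{1/2}\sim a^{1/2}ga^{1/2}\le a$ and invoke the standard fact that a projection Cuntz-subequivalent to $a$ is Murray--von Neumann equivalent to a projection in $\overline{aAa}$; the two finishes are proved by essentially the same perturbation argument, and yours is arguably cleaner to bookkeep. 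One small caveat: Theorem \ref{thm:2.1}(2) as stated (decomposing $a$ as $b+p$ up to Cuntz equivalence) is not the statement you need for that transfer step --- you need the separate standard lemma that $p\precsim a$ for a projection $p$ forces $p$ to be equivalent to a projection in $\mathrm{Her}(a)$, which your parenthetical ``usual spectral-projection argument'' correctly identifies but which should be cited or proved rather than attributed to Theorem \ref{thm:2.1}(2). Your observation that condition $(3)$ of Definition \ref{def:1.2} is never used is accurate and consistent with the paper's proof.
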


    \begin{proof}
    	Let $B$ be a non-zero hereditary $\rm C^{*}$-subalgebra of $A$. We must show that $B$ contains a non-zero projection.
    	Choose a positive element $a$ of $B$, $\|a\|=1$.
    	
    	Let $0<\varepsilon<1$ be two positive numbers. Define
    	\begin{eqnarray*}
    		f_{\varepsilon}(t)=
    		\begin{cases}
    			1,& \text{if}\quad t\geq\varepsilon  \\
    			(2t-\varepsilon)/\varepsilon,& \text{if} \quad\varepsilon/2<t\leq\varepsilon \\
    			0,& \text{if} \quad0\leq t\leq\varepsilon/2.
    		\end{cases}
    	\end{eqnarray*}
        	
    	Let $F=\{a,~a^{1/2}\}$, and any $\varepsilon'>0$. Since $A\in{\rm gTA}\Omega$, there exists a
    	$\rm C^{*}$-subalgebra $D$ of $A$ with $D\in\Omega$, and a positive element $g\in D$, such that the following  conditions hold:
    	
    	$(1)$ $\|xg-gx\|<\ep'$, for any $x\in F$,
    	
    	$(2)$ $gx\in_{\ep'} B, xg\in_{\ep'} B$, for any $x\in F$, and

    	$(3)$ $\|gag\|\geq 1-\varepsilon'$.
    	
    	By $(1)$ and $(2)$, with sufficiently small $\varepsilon'$, there exists an element of norm at most one
    	$a'\in D_{+}$, such that $\|a^{\frac{1}{2}}g^{2}a^{\frac{1}{2}}-a'\|<\delta_{2}$, $\|gag-a'\|<\delta_{2}$.
    	
    	Since $\|gag-a'\|<\delta_{2}$, $\|gag\|\geq1-\varepsilon'$, one has $(a'-\varepsilon)_{+}\neq0$
    	(otherwise,$1-\varepsilon'<\delta_{2}+\varepsilon$), since $D\in\Omega$, $D$ has the property $\rm {SP}$, and so
    	there exists a non-zero projection $p\in \overline{(a'-\varepsilon)_{+}D(a'-\varepsilon)_{+}}$.
    	
    	Since $f_{\varepsilon}(a')(a'-\varepsilon)_{+}=(a'-\varepsilon)_{+}$, we have $f_{\varepsilon}(a')p=p$,
    	 and since  $\|a^{\frac{1}{2}}g^{2}a^{\frac{1}{2}}-a'\|<\delta_{2}$, we get
    	 $\|f_{\varepsilon}(a^{\frac{1}{2}}g^{2}a^{\frac{1}{2}})-f_{\varepsilon}(a')\|<\varepsilon$.
    	
    	Then we have
        $	 \|f_{\varepsilon}(a^{\frac{1}{2}}g^{2}a^{\frac{1}{2}})pf_{\varepsilon}(a^{\frac{1}{2}}g^{2}a^{\frac{1}{2}})-p\|$ \\
        $	 =\|f_{\varepsilon}(a^{\frac{1}{2}}g^{2}a^{\frac{1}{2}})pf_{\varepsilon}(a^{\frac{1}{2}}g^{2}a^{\frac{1}{2}})-
        	f_{\varepsilon}(a')pf_{\varepsilon}(a')<3\varepsilon $.

       With sufficiently small $\varepsilon$, there exists a non-zero projection  $q\in {\rm Her(a)}$. Since
       $a\in B$ and  $B$ is hereditary,  $q\in B$. This show that  $A$ has the property $\rm {SP}$.
    \end{proof}

 \end{document}